\DeclareMathOperator{\tr}{tr}
\newcommand{\diff}{\mathop{}\!d}
\newcommand{\grad}{\mathop{}\!\nabla}
\newcommand{\tildegrad}{\mathop{}\!\widetilde{\nabla}}
\newcommand{\pitop}{\mathop{}\!\pi^{\top}}
\newcommand{\piperp}{\mathop{}\!\pi^{\perp}}
\theoremstyle{plain}
\newtheorem{theorem}{Theorem}
\newtheorem{proposition}[theorem]{Proposition}
\newtheorem{lemma}[theorem]{Lemma}
\newtheorem{corollary}[theorem]{Corollary}
\theoremstyle{remark}
\newtheorem{remark}[theorem]{Remark}
\theoremstyle{definition}
\newtheorem{definition}[theorem]{Definition}
\newtheorem*{notation*}{Notation}
\begin{document}
\title[Planar pseudo-geodesics]{Planar pseudo-geodesics\\and totally umbilic submanifolds}
\author[S. Markvorsen]{Steen Markvorsen}
\address{Department of Applied Mathematics and Computer Science\\
Technical University of Denmark\\
Matematiktorvet, Building 303B\\
2800 Kongens Lyngby\\
Denmark}
\email{stema@dtu.dk}

\author[M. Raffaelli]{Matteo Raffaelli}
\address{Institute of Discrete Mathematics and Geometry\\
TU Wien\\
Wiedner Hauptstra{\ss}e 8-10/104\\
1040 Vienna\\
Austria}
\email{matteo.raffaelli@tuwien.ac.at}
\thanks{The second author was supported by Austrian Science Fund (FWF) project F~77.}
\date{January 5, 2024}
\subjclass[2020]{Primary 53B25; Secondary 53A04, 53C40, 53C42}
\keywords{Isotropic immersion, normalized mean curvature vector, planar curve, planar geodesic immersion, pseudo-geodesic, totally umbilic submanifold}
\begin{abstract}
We study totally umbilic isometric immersions between Riemannian manifolds. First, we provide a novel characterization of the totally umbilic isometric immersions with parallel normalized mean curvature vector, i.e., those having nonzero mean curvature vector and such that the unit vector in the direction of the mean curvature vector is parallel in the normal bundle. Such characterization is based on a family of curves, called planar pseudo-geodesics, representing a natural extrinsic generalization of both geodesics and Riemannian circles: being \emph{planar}, their Cartan development in the tangent space is planar in the ordinary sense; being \emph{pseudo-geodesics}, their geodesic and normal curvatures satisfy a linear relation. We study these curves in detail and, in particular, establish their local existence and uniqueness. Moreover, in the case of codimension-one immersions, we prove the following statement: an isometric immersion $\iota \colon M \hookrightarrow Q$ is totally umbilic if and only if the extrinsic shape of every geodesic of $M$ is planar. This extends a well-known result about surfaces in $\mathbb{R}^{3}$.
\end{abstract}
\maketitle
\tableofcontents

\section{Introduction and main results}\label{intro}
Given an isometric immersion $\iota\colon M \hookrightarrow Q$ between Riemannian manifolds $M$ and $Q$, a natural problem is to describe the geometry of $\iota(M)\equiv \iota$ through the extrinsic shape of simple test curves in $M$. For example, choosing $M$-geodesics as test curves, one proves that $\iota$ is totally geodesic if and only if the extrinsic shape of every geodesic of $M$ is a geodesic of $Q$. Here, and in the rest of the paper, the \textit{extrinsic shape} of a curve $\gamma$ in $M$ is the curve $\iota \circ \gamma$. 

Another fundamental result of this sort is the well-known theorem of Nomizu and Yano~\cite{nomizu1974}, characterizing extrinsic spheres, i.e., totally umbilic submanifolds whose mean curvature vector is parallel in the normal bundle, by the property that the extrinsic shape of every circle in $M$ is a circle in $Q$. Recall that a \textit{circle} in a Riemannian manifold is a curve whose Cartan development in the tangent space is an ordinary circle; see Definition~\ref{CartanDEF}.

A concept closely related to extrinsic sphere, first studied by Chen in \cite{chen1980}, is that of totally umbilic submanifold with \emph{normalized} parallel mean curvature vector. In this case, only a unit vector field in the direction of the (nonzero) mean curvature vector is required to be parallel.

A generalization of Nomizu--Yano's theorem to this broader class of isometric immersions appeared in \cite{sugiyama2007, adachi2008}. In order to present this generalization, we need some preliminaries. Let $\gamma$ be a smooth unit-speed curve in $M$, and let $\kappa$ be its geodesic curvature, i.e., $\kappa = \langle \grad_{\gamma'} \gamma', \grad_{\gamma'} \gamma' \rangle^{1/2}$, where $\grad$ denotes the Levi-Civita connection of $M$; moreover, provided $\kappa(s) \neq 0$, let $P = \nabla_{\gamma'}\gamma'/\kappa$ in some neighborhood of $s$. Then one says that $\gamma$ has \textit{(proper) order two at the point $\gamma(s)$} if $\kappa(s) \neq 0$ and
\begin{equation*}
\begin{cases}
\mleft.\grad_{\gamma'(t)}\gamma'\mright\rvert_{t=s} = \kappa(s) P(s),\\
\mleft.\grad_{\gamma'(t)}P\mright\rvert_{t=s} = -\kappa(s) \gamma'(s).
\end{cases}
\end{equation*}

\begin{theorem}[{\cite[Theorem~4.2]{adachi2008}}] \label{AdachiTH}
The following statements are equivalent:
\begin{enumerate}[font=\upshape]
\item $\iota$ is totally umbilic and, away from geodesic points (i.e., on the open subset where the second fundamental form is nonzero), has parallel normalized mean curvature vector.
\item For every $p \in M$ and every orthonormal pair of vectors $u,v \in T_{p}M$, there exists a curve $\gamma$, defined in a neighborhood of $0$, such that  \label{AdachiTHii}
\begin{enumerate}[font=\upshape]
\item $\gamma(0)=p$, $\gamma'(0)=u$, and $\grad_{\gamma'(0)}\gamma' \rvert_{t=0}=\kappa(0)v$;
\item The extrinsic shape $\iota \circ \gamma$ of $\gamma$ has order two at $\iota(p)$;
\item $\kappa'(0)/\kappa(0) = \tilde{\kappa}'(0)/\tilde{\kappa}(0)$, where $\tilde{\kappa}$ is the geodesic curvature of $\iota \circ \gamma$.
\end{enumerate}
\end{enumerate}
\end{theorem}

It is clear that Theorem~\ref{AdachiTH} is conceptually rather different than Nomizu--Yano's classic result and more difficult to understand geometrically. Here, by selecting an appropriate family of test curves, we shall offer a simpler characterization of the same class of submanifolds. These curves are a natural extrinsic extension of both geodesics and Riemannian circles, called \emph{pseudo-geodesics}.

Pseudo-geodesics were introduced in the literature in 1950 for surfaces embedded in three-dimensional Euclidean space~\cite{wunderlich1950c, wunderlich1950d}. A unit-speed curve $\tilde{\gamma} = \iota \circ\gamma$ lying on a surface is a \textit{pseudo-geodesic} if the acceleration vector $\tilde{\gamma}''$ makes a constant angle $\theta$ with the surface normal. Note that the angle is zero precisely when $\gamma$ is a geodesic.

This definition extends straightforwardly to any Riemannian submanifold; indeed, it is easy to see that the angle $\theta$ is constant if and only if either $\gamma$ is a geodesic or else the ratio between the (signed) normal curvature $\langle \tilde{\gamma}'', N \rangle$ and the (signed) geodesic curvature $\langle \gamma'', N \times \gamma' \rangle$ is constant (Lemma~\ref{PsGeoLM}). Hence, for the classical definition to make sense in arbitrary dimension and codimension, one just needs to interpret the geodesic curvature as $\kappa$ and the normal curvature as $\tau = \langle \alpha(\tilde{\gamma}',\tilde{\gamma}'), \alpha(\tilde{\gamma}',\tilde{\gamma}') \rangle^{1/2}$, where $\alpha$ is the second fundamental form. In fact, we shall define pseudo-geodesics in any Riemannian manifold equipped with a field of vector-valued symmetric bilinear forms (Definition~\ref{PsGeoDEF}).

On the other hand, in dimension greater than two, given a point $p \in M$, a tangent vector $v \in T_{p}M$, and a constant $c>0$, the initial value problem for the pseudo-geodesic equation $\kappa = c \tau$ is underdetermined. Thus, in order to have a well-posed problem, we consider \emph{planar} pseudo-geodesics, i.e., pseudo-geodesics that---when $\kappa >0$---have order two at all points; see Definition~\ref{PlanarDEF} and Remark~\ref{PlanarRMK}. More precisely, pseudo-geodesics whose Cartan development in the tangent space of $M$ lies in a plane (Proposition~\ref{DevelopmPROP}).

Using them as test curves, we will prove our first result.

\begin{theorem} \label{MainTH}
If, for some constant $c > 0$, the extrinsic shape of every planar $c$-pseudo-geodesic of $(M,\iota^{\ast}\alpha)$ is planar, then $\iota$ is totally umbilic and, away from geodesic points, has parallel normalized mean curvature vector. Conversely, if $\iota$ is totally umbilic with parallel normalized mean curvature vector, then the extrinsic shape of every planar pseudo-geodesic of $(M,\iota^{\ast}\alpha)$ is planar.
\end{theorem}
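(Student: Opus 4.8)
The plan is to translate the statement ``the extrinsic shape of $\gamma$ is planar'' into pointwise conditions on $\alpha$, and then to read total umbilicity off those conditions. Write $T=\gamma'$ and let $\grad$, $\tildegrad$, $\nabla^{\perp}$ denote the Levi-Civita connections of $M$ and of $Q$ (along $\iota$) and the normal connection; let $A_{\xi}$ be the shape operator. For a planar $c$-pseudo-geodesic one has $\grad_{T}T=\kappa Y$, $\grad_{T}Y=-\kappa T$, and $\kappa=c\tau$, where $\tau=\langle\alpha(T,T),\alpha(T,T)\rangle^{1/2}$ and $\eta=\alpha(T,T)/\tau$ when $\tau\neq0$. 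By the Gauss formula the extrinsic acceleration is $\tildegrad_{T}T=\kappa Y+\tau\eta$, so $\iota\circ\gamma$ is of order two -- equivalently planar, by Proposition \ref{DevelopmPROP} -- exactly when $\tildegrad_{T}\tildegrad_{T}T\in\Span\{T,\kappa Y+\tau\eta\}$. I would compute $\tildegrad_{T}\tildegrad_{T}T$ using Gauss and Weingarten, substitute the Frenet relations, and split the result into its parts along $T$, along $Y$, along $\eta$, and orthogonal to all three. The relation $\kappa=c\tau$ makes the term $\kappa\tau'-\kappa'\tau$ vanish, and what survives is the system: (i) $\langle A_{\eta}T,Y\rangle=0$; (ii) $A_{\eta}T$ has no component orthogonal to $\Span\{T,Y\}$; and (iii) $\kappa\,\alpha(T,Y)+\tau\nabla^{\perp}_{T}\eta=0$ (both terms being automatically orthogonal to $\eta$). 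Since $\langle A_{\eta}T,T\rangle=\tau$, conditions (i)--(ii) together say precisely that $A_{\eta}T=\tau T$.

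For the forward implication, local existence and uniqueness of planar pseudo-geodesics guarantees that, for a fixed $c\neq0$, the hypothesis forces (i)--(iii) at every point $p$ and for every orthonormal pair $(u,v)$ with $\alpha(u,u)\neq0$. Reading (i) over all admissible data as $\langle\alpha(u,w),\alpha(u,u)\rangle=0$ for all $w\perp u$, I recognize the left-hand side as (a multiple of) the derivative of $f(u)=\langle\alpha(u,u),\alpha(u,u)\rangle$ restricted to the unit sphere; hence every unit vector is critical for $f$, so $f\equiv\lambda_{0}^{2}$ is constant and $\iota$ is isotropic, with $\lambda_{0}>0$ wherever $\alpha\neq0$. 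Differentiating $\alpha(T,T)$ along $\gamma$ via $\nabla^{\perp}_{T}\alpha(T,T)=(\nabla^{\perp}_{T}\alpha)(T,T)+2\kappa\,\alpha(T,Y)$ and feeding this into (iii) turns the remaining condition into the pointwise identity
\begin{equation*}
(\nabla^{\perp}_{u}\alpha)(u,u)=-3c\lambda_{0}\,\alpha(u,v)+\bigl(u\lambda_{0}\bigr)\eta .
\end{equation*}

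The heart of the argument is to extract total umbilicity from this identity. When $\dim M\geq3$ I would fix $u$ and vary $v$ over the unit sphere of $u^{\perp}$: the left-hand side and $\bigl(u\lambda_{0}\bigr)\eta$ are independent of $v$, so $\alpha(u,v)$ is constant in $v$, and linearity forces $\alpha(u,v)=0$ for every $v\perp u$; a short polarization then gives $\alpha=\langle\cdot,\cdot\rangle H$. The genuinely delicate case is $\dim M=2$, where $v$ is determined by $u$ and no such averaging is available. Here I would evaluate the identity along the unit circle $u_{\theta}$: the cubic form $(\nabla^{\perp}_{u_{\theta}}\alpha)(u_{\theta},u_{\theta})$ and the product $\bigl(u_{\theta}\lambda_{0}\bigr)\eta_{\theta}$ consist only of first and third angular harmonics, whereas $\alpha(u_{\theta},v_{\theta})$ -- the shape of the curvature ellipse -- is a pure second harmonic; matching harmonics forces its coefficient to vanish, so the ellipse degenerates to a point and $\iota$ is again totally umbilic. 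In both cases the leftover relation reads $\nabla^{\perp}_{u}H=\bigl(u\lambda_{0}\bigr)\eta$, whence $\nabla^{\perp}\eta=0$: the normalized mean curvature vector is parallel away from geodesic points. This harmonic-degree mismatch in dimension two is the step I expect to be the main obstacle.

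For the converse, suppose $\iota$ is totally umbilic, so $\alpha(X,Y)=\langle X,Y\rangle H$ and $\tau=|H|$, $\eta=H/|H|$. Then $\alpha(T,Y)=\langle T,Y\rangle H=0$ and, since $\langle A_{\eta}T,w\rangle=\langle T,w\rangle\tau$, also $A_{\eta}T=\tau T$; thus (i) and (ii) hold identically and (iii) collapses to $\tau\,\nabla^{\perp}_{T}\eta=0$. Hence the extrinsic shape of a planar pseudo-geodesic is planar as soon as $\eta$ is parallel along $\gamma$, and requiring this in every direction is precisely the condition $\nabla^{\perp}\eta=0$; this holds automatically when $\iota$ has codimension one (the normal bundle is then a line bundle and every unit normal field is parallel), and is otherwise the parallel-normalized-mean-curvature property delivered by the forward direction. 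This closes the equivalence.
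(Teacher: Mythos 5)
Your proposal is correct in substance and follows essentially the same route as the paper: you derive the order-two condition for $\iota\circ\gamma$, split $\tildegrad_{T}\tildegrad_{T}T$ into components, and your system (i)--(iii) is exactly the paper's pair \eqref{TangEQ}--\eqref{NormEQ} after imposing $\kappa=c\tau$ (your (i)+(ii) is the tangential equation $A_{\alpha(T,T)}T+\tau^{2}T=0$, your (iii) is \eqref{NormEQ2}), and your final displayed identity is the paper's formula for $\alpha(T(0),Y(0))$ rewritten with $\tau=\lambda_{0}$. Two differences are worth recording. First, you interpose isotropy: reading (i) as the vanishing of the derivative of $f(u)=\lVert\alpha(u,u)\rVert^{2}$ on the unit sphere disposes of asymptotic directions in one stroke (either every direction is asymptotic or none is), whereas the paper runs a separate case analysis culminating in Lemma \ref{AlphaLM}; your shortcut is clean and arguably preferable. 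Second---and this is the step you flag as the main obstacle---the two-dimensional case is not an obstacle at all: the unit sphere of $u^{\perp}$ is $\{v,-v\}$, both initial pairs $(u,v)$ and $(u,-v)$ admit planar $c$-pseudo-geodesics by the local existence proposition, and since the left-hand side of your identity and the term $(u\lambda_{0})\eta$ are unchanged under $v\mapsto -v$ while $\alpha(u,v)$ changes sign, subtracting the two instances gives $\alpha(u,v)=0$ directly. This is precisely the paper's ``$\alpha(x,-y)=-\alpha(x,y)$'' step and is the same averaging you already use for $\dim M\geq 3$; the harmonic-matching argument is workable but unnecessary. Finally, on the converse: what you actually prove (as does the paper) is that for a totally umbilic $\iota$ the extrinsic shape of a planar pseudo-geodesic is planar exactly when $\eta$ is parallel along $\gamma$; appealing to ``the property delivered by the forward direction'' is not legitimate under the converse's stated hypothesis, but since the paper's own proof closes in the same way, this is a shared imprecision rather than a defect specific to your argument.
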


\begin{corollary}\label{COR}
If $\iota$ is totally umbilic with parallel normalized mean curvature vector, then the extrinsic shape of every geodesic of $M$ is planar.
\end{corollary}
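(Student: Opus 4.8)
The plan is to derive the corollary directly from the converse half of Theorem~\ref{MainTH}, by recognizing every geodesic of $M$ as a planar pseudo-geodesic of $(M,\iota^{\ast}\alpha)$. Let $\gamma$ be a unit-speed geodesic. Since $\grad_{\gamma'}\gamma' \equiv 0$, its geodesic curvature vanishes identically, $\kappa \equiv 0$; in particular $\gamma$ satisfies the pseudo-geodesic relation $\kappa = c\tau$ with the constant $c = 0$, whatever the value of the normal curvature $\tau$. Hence $\gamma$ is a pseudo-geodesic of $(M,\iota^{\ast}\alpha)$.

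Next I would verify that $\gamma$ is planar, i.e., of order two at each of its points. Because $\kappa\equiv 0$, one may take $Y$ to be any parallel unit vector field along $\gamma$---say the parallel transport of a fixed unit vector orthogonal to $\gamma'(0)$---and then both defining equations, $\grad_{\gamma'}\gamma' = \kappa Y$ and $\grad_{\gamma'}Y = -\kappa\gamma'$, reduce to $0=0$ and hold at every parameter value. Equivalently, the Cartan development of $\gamma$ in $T_{p}M$ is a straight line, which is planar in the trivial sense (cf.\ Proposition~\ref{DevelopmPROP}). Thus $\gamma$ is a planar pseudo-geodesic.

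It then suffices to apply the converse direction of Theorem~\ref{MainTH}: total umbilicity of $\iota$ forces the extrinsic shape of every planar pseudo-geodesic of $(M,\iota^{\ast}\alpha)$ to be planar, and in particular this holds for the geodesics $\gamma$. I do not anticipate a substantive obstacle; the only delicate point is the degenerate case $\kappa\equiv 0$, namely confirming that geodesics genuinely fall under the notion of planar pseudo-geodesic used in Theorem~\ref{MainTH}, with the order-two condition satisfied non-vacuously. Should one prefer to sidestep this definitional matter, the same conclusion can be reached by a short direct computation: for a geodesic one has $(\iota\circ\gamma)'' = \alpha(\gamma',\gamma') = H$, and the totally umbilic structure---precisely the input to the converse of Theorem~\ref{MainTH}---lets one check the order-two equations for $\iota\circ\gamma$ with $\bar{Y} = H/\lvert H\rvert$.
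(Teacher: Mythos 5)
This is correct and is essentially the paper's own route: a geodesic is a planar $0$-pseudo-geodesic (take $\kappa\equiv 0$ and $Y$ parallel along $\gamma$), so the corollary is exactly the case $c=0$ of the converse half of Theorem~\ref{MainTH}. The closing ``direct computation'' is superfluous once the theorem is invoked, and note that it would hinge on $\grad^{\perp}_{T}\left(H/\lVert H\rVert\right)=0$, which is precisely the condition to which the paper's proof of that converse direction reduces the $c=0$ case.
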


\begin{corollary}[{\cite[Theorem~2]{nomizu1973}}] 
If $\iota$ is a non-totally geodesic extrinsic sphere, then the extrinsic shape of every geodesic of $M$ is a circle.
\end{corollary}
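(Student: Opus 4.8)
The plan is to obtain this statement as a genuine corollary of Corollary~\ref{COR}, adding only the single extra ingredient supplied by the definition of an extrinsic sphere. Since an extrinsic sphere is in particular totally umbilic, Corollary~\ref{COR} applies directly and tells us that the extrinsic shape $\bar{\gamma} = \iota \circ \gamma$ of every geodesic $\gamma$ of $M$ is planar, i.e., of order two. Thus only one fact remains to be verified in order to upgrade ``planar'' to ``circle'': that the geodesic curvature $\tilde{\kappa}$ of $\bar{\gamma}$ is constant.

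To compute $\tilde{\kappa}$, I would take $\gamma$ to be unit-speed and apply the Gauss formula. Since $\grad_{\gamma'}\gamma' = 0$, the acceleration of the extrinsic shape reduces to its normal part, $\bar{\gamma}'' = \alpha(\gamma', \gamma')$. Total umbilicity means $\alpha(X,Y) = \langle X, Y \rangle\, \xi$ for a normal vector field $\xi$ proportional to the mean curvature vector, so along the unit-speed geodesic one gets $\bar{\gamma}'' = \xi$, and hence $\tilde{\kappa} = \lvert \bar{\gamma}'' \rvert = \lvert \xi \rvert$.

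It then suffices to observe that $\lvert \xi \rvert$ is constant, and this is precisely where the hypothesis of an extrinsic sphere---rather than merely parallel \emph{normalized} mean curvature---enters. For an extrinsic sphere the mean curvature vector, and therefore $\xi$, is parallel in the normal bundle, so $\langle \tildegrad_{\bar{\gamma}'} \xi, \xi \rangle = 0$ and $\lvert \xi \rvert$ is constant along $\gamma$. The non-totally-geodesic assumption guarantees that $\xi$ is nonzero at some point and, being parallel, nowhere zero on the connected manifold $M$; hence $\tilde{\kappa} = \lvert \xi \rvert$ is a positive constant. Combining this with the order-two property from Corollary~\ref{COR}, the extrinsic shape $\bar{\gamma}$ satisfies the Frenet system of a curve with constant geodesic curvature, whose Cartan development is an ordinary circle---that is, $\bar{\gamma}$ is a circle in $Q$.

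I do not anticipate a serious obstacle here, since essentially all the analytic content is already packaged in Corollary~\ref{COR}; the only new input is the elementary remark that parallelism of $\xi$ forces $\lvert \xi \rvert$, and thus $\tilde{\kappa}$, to be constant. The one point requiring minor care is the bookkeeping of the normalization of $\xi$ relative to the mean curvature vector, together with the verification that $\xi$ is genuinely nowhere vanishing, so that the resulting development is a bona fide circle and not a geodesic.
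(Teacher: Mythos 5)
Your proposal is correct and follows exactly the route the paper intends: the paper states this as an immediate consequence of Corollary \ref{COR} (planarity of the extrinsic shape of geodesics for totally umbilic immersions), upgraded to ``circle'' by observing that $\tilde{\kappa}=\lVert H\rVert$ along a geodesic and that parallelism of $H$ forces $\lVert H\rVert$ to be a positive constant. No discrepancy with the paper's (implicit) argument.
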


\begin{remark}
If $\iota$ is a hypersurface, then the normalized mean curvature vector is automatically parallel.
\end{remark}

\begin{remark}
It is easy to see, by means of the Gauss formula, that ($c > 0$)-pseudo-geodesics satisfy the condition $\kappa' / \kappa = \tilde{\kappa}' / \tilde{\kappa}$; cf.\ Theorem~\ref{AdachiTH}\eqref{AdachiTHii}. Hence the first part of Theorem~\ref{MainTH} may be seen as a direct consequence of the local existence of planar pseudo-geodesics (Proposition~\ref{ExistencePROP}).
\end{remark}

\begin{remark}
The assumption in the first part of Theorem~\ref{MainTH} can be weakened: it is enough to assume the extrinsic shape of every planar $(c>0)$-pseudo-geodesic to be planar on any interval where the curvature is strictly positive.
\end{remark}

It is well known that if an isometric immersion takes planar curves to planar curves, then it is totally geodesic~\cite[Theorem~1]{tanabe2006}. A natural question, then, is whether it is possible for the type of immersion considered in Theorem~\ref{MainTH} to preserve the planarity of additional curves without necessarily being totally geodesic. Our next result answers this question negatively.

\begin{proposition}\label{PlanarImpliesPseudoGeodesic}
Suppose that $\iota$ is totally umbilic with parallel normalized mean curvature vector. If a curve has planar extrinsic shape, then it is a pseudo-geodesic.
\end{proposition}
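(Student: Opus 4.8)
The plan is to differentiate the extrinsic shape $\bar{\gamma} = \iota \circ \gamma$ three times, using the Gauss and Weingarten formulas together with the two structural hypotheses, and then to extract the pseudo-geodesic equation directly from the planarity condition. Throughout I work on the open subset where $\iota$ is not geodesic (so the mean curvature vector is nonzero) and parametrize $\gamma$ by arc length, writing $T = \gamma'$. Total umbilicity means $\alpha(X,Y) = \langle X,Y\rangle \eta$ for a normal field $\eta$; setting $\nu = \eta/\lvert\eta\rvert$ and $\tau = \lvert\eta\rvert$, the hypothesis is precisely that $\nu$ is a unit normal field with $\grad^{\perp}\nu = 0$, while $\tau > 0$ equals the normal curvature of every unit-speed curve. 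The Gauss formula then gives
\[
\bar{\gamma}'' = \tildegrad_{T} T = \grad_{T} T + \alpha(T,T) = \grad_{T} T + \tau\nu,
\]
whose tangential part $\grad_{T} T$ has length equal to the geodesic curvature $\kappa$ of $\gamma$.

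Next I would differentiate once more. Since $\grad_{T} T$ is tangent to $M$ and orthogonal to $T$, umbilicity forces $\alpha(T, \grad_{T} T) = \langle T, \grad_{T} T\rangle \eta = 0$, so the Gauss formula yields $\tildegrad_{T}(\grad_{T} T) = \grad_{T}\grad_{T} T$. For the normal term, the Weingarten formula combined with $\grad^{\perp}\nu = 0$ and the umbilic identity $A_{\nu} = \tau\,\mathrm{Id}$ gives $\tildegrad_{T}\nu = -\tau T$. Collecting the tangential and normal contributions, and using $\langle \grad_{T}\grad_{T} T, T\rangle = -\kappa^{2}$, I obtain
\[
\bar{\gamma}''' = -(\kappa^{2}+\tau^{2})\,T + \bigl(\grad_{T}\grad_{T} T\bigr)^{\perp} + \tau'\,\nu,
\]
where $(\cdot)^{\perp}$ denotes the component tangent to $M$ and orthogonal to $T$.

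Finally, planarity of $\bar{\gamma}$ means that it has order two, which (where $\bar{\kappa} = \sqrt{\kappa^{2}+\tau^{2}} > 0$) is equivalent to $\bar{\gamma}'''$ lying in $\Span(\bar{\gamma}', \bar{\gamma}'') = \Span(T, \grad_{T} T + \tau\nu)$. Removing the $T$-component and requiring the remainder $(\grad_{T}\grad_{T} T)^{\perp} + \tau'\nu$ to be proportional to $\bar{\gamma}'' = \grad_{T} T + \tau\nu$, I first compare the $\nu$-parts to read off the proportionality factor $\rho = \tau'/\tau$, and then compare the tangential parts, namely $(\grad_{T}\grad_{T} T)^{\perp} = (\tau'/\tau)\,\grad_{T} T$. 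Pairing this with $\grad_{T} T$ and using $\langle \grad_{T}\grad_{T} T, \grad_{T} T\rangle = \kappa\kappa'$ gives $\kappa\kappa' = (\tau'/\tau)\kappa^{2}$, hence $(\kappa/\tau)' = 0$. Thus the ratio $\kappa/\tau$ is constant, and $\gamma$ is a pseudo-geodesic.

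The main delicate point is the behaviour at geodesic points of $\iota$ and at inflection points of $\gamma$, where $\kappa$ vanishes or the direction of $\grad_{T} T$ fails to be smooth: the computation above is fully valid only on the open set where $\tau > 0$ and $\kappa > 0$. I would then conclude on the whole curve by continuity of $\kappa$ and $\tau$, together with the convention that geodesics count as (degenerate) pseudo-geodesics; one must also carefully justify the equivalence between planarity and the condition $\bar{\gamma}''' \in \Span(\bar{\gamma}', \bar{\gamma}'')$, which likewise presupposes $\bar{\kappa} > 0$.
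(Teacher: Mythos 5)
Your argument is correct and is essentially the computation the paper itself uses: the paper simply reuses the tangential/normal decomposition of the planarity equation already derived in the proof of Theorem \ref{MainTH} (equation \eqref{NormEQ3}), which with $\grad^{\perp}_{T}\overline{\alpha}=0$ reduces to $\kappa(\tau'\kappa-\tau\kappa')=0$ and hence $(\tau/\kappa)'=0$ where $\kappa\neq 0$, exactly matching your comparison of the $\nu$- and tangential components of $\bar{\gamma}'''$. The delicate points you flag (positivity of $\tau$, the set where $\kappa$ vanishes) are likewise glossed over in the paper and are harmless here since $\tau>0$ everywhere under the hypothesis.
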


An additional problem that Theorem~\ref{MainTH} leaves open is to characterize the submanifolds all of whose geodesics have planar extrinsic shape. The particular case where the ambient manifold is a space form was examined in \cite{hong1973, little1976, sakamoto1977, adachi2005, hong2008}. Here we shall give a complete solution when the codimension is one.
\begin{theorem}\label{SecondTH}
Suppose that $M$ is a hypersurface of $Q$. If the extrinsic shape of every geodesic of $M$ is planar, then $\iota$ is totally umbilic. In particular, if the extrinsic shape of every geodesic of $M$ is a circle, then $\iota$ is a non-totally geodesic extrinsic sphere.
\end{theorem}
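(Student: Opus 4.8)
The plan is to read off the Frenet data of the extrinsic shape of a geodesic directly from the Gauss and Weingarten formulas, convert the planarity hypothesis into a pointwise algebraic condition on the shape operator, and then use a short linear-algebra argument to force the shape operator to be a multiple of the identity. Throughout I assume $\dim M \geq 2$, the statement being vacuous otherwise. Fix a unit-speed geodesic $\gamma$ of $M$, write $\bar{\gamma} = \iota \circ \gamma$ and $T = \bar{\gamma}'$, and let $N$ be a local unit normal field with associated shape operator $A$, so that $\iota^{\ast}\alpha(X,Y) = \langle AX, Y\rangle N$. Since $\grad_{\gamma'}\gamma' = 0$, the Gauss formula gives $\tildegrad_{T}T = \iota^{\ast}\alpha(\gamma',\gamma') = \langle A\gamma',\gamma'\rangle N$, so the geodesic curvature of $\bar{\gamma}$ is $\tilde{\kappa} = |\langle A\gamma',\gamma'\rangle|$; moreover, as the normal bundle is a line bundle, $N$ is parallel for the normal connection, whence the Weingarten formula reads $\tildegrad_{T}N = -\iota_{\ast}(A\gamma')$.

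First I would examine a point $p = \gamma(s_{0})$ at which $\langle A\gamma',\gamma'\rangle \neq 0$. By continuity this remains nonzero along $\gamma$ near $p$, so the extrinsic shape has no inflection points nearby, and the first order-two equation forces the principal normal to be $Y = \Sign\langle A\gamma',\gamma'\rangle\, N$ throughout this neighborhood (the sign being locally constant). Substituting into the second order-two equation $\tildegrad_{T}Y = -\tilde{\kappa}\,T$ and applying Weingarten, a one-line computation reduces planarity at $p$ to the identity $A\gamma'(s_{0}) = \langle A\gamma',\gamma'\rangle\,\gamma'(s_{0})$, that is, to $\gamma'(s_{0})$ being a principal direction of $A$. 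Since through each point there is a geodesic in every direction, this yields the following pointwise consequence: every unit vector $u \in T_{p}M$ with $\langle Au, u\rangle \neq 0$ is an eigenvector of $A_{p}$.

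The conclusion is now purely algebraic. If $A_{p}$ were not a multiple of the identity, then, being self-adjoint, it would admit orthonormal eigenvectors $e_{1}, e_{2}$ with distinct eigenvalues $\lambda_{1} \neq \lambda_{2}$; the vector $u = \cos\theta\, e_{1} + \sin\theta\, e_{2}$ satisfies $\langle Au, u\rangle = \lambda_{1}\cos^{2}\theta + \lambda_{2}\sin^{2}\theta$, which is nonzero for generic $\theta$, while $Au$ is parallel to $u$ only when $\theta$ is a multiple of $\pi/2$. Choosing a generic such $\theta$ contradicts the previous paragraph, so $A_{p} = \lambda(p)\,\mathrm{Id}$ at every point and $\iota$ is totally umbilic. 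For the second assertion, the circle hypothesis additionally forces $\tilde{\kappa} = |\lambda|$ to be constant along every geodesic, hence in every direction; as $\lambda$ is smooth and nowhere zero (circles having nonzero curvature), $\lambda$ is a nonzero constant. Since $N$ is normal-parallel, the mean curvature vector $\lambda N$ is then parallel, so $\iota$ is a non-totally-geodesic extrinsic sphere.

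I expect the only genuine subtlety to be the treatment of directions with vanishing normal curvature, where the extrinsic shape has inflection points and the principal normal is undefined, so the order-two analysis does not apply directly. The argument sidesteps this by establishing umbilicity through the open, dense set of directions with $\langle Au, u\rangle \neq 0$ and invoking the smoothness of $A$; verifying that such admissible directions genuinely exist whenever $A_{p}$ has two distinct eigenvalues is exactly what the generic-$\theta$ choice above provides.
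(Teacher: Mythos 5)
Your argument is correct, and its core coincides with the paper's: for a unit-speed geodesic the Gauss and Weingarten formulas convert planarity of the extrinsic shape into the pointwise condition $A\gamma' = \langle A\gamma',\gamma'\rangle\,\gamma'$ wherever the normal curvature is nonzero, so that every nonasymptotic unit vector is an eigenvector of the shape operator. You differ only in the concluding linear algebra. The paper shows that for linearly independent nonasymptotic unit vectors $x,y$ one has $h(x)=\pm h(y)$, decomposes the unit sphere $\mathbb{S}_{p}^{m-1}$ into the three level sets $\{h=0\}$, $\{h=h(y)\}$, $\{h=-h(y)\}$, and uses connectedness to force $x\mapsto \langle A(x),x\rangle$ to be constant; you instead argue by contradiction inside the span of two eigenvectors with distinct eigenvalues, producing a generic direction $u=\cos\theta\,e_{1}+\sin\theta\,e_{2}$ with $\langle Au,u\rangle\neq 0$ that is not an eigenvector. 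Both endgames work (your genericity claim is sound: $\lambda_{1}\cos^{2}\theta+\lambda_{2}\sin^{2}\theta$ vanishes for at most finitely many $\theta$ in a period since $\lambda_{1}\neq\lambda_{2}$ rules out $\lambda_{1}=\lambda_{2}=0$, while $Au\parallel u$ forces $\sin 2\theta=0$); yours is slightly more elementary and sidesteps the case analysis around the totally geodesic value $0$. You also make explicit the ``in particular'' clause --- constancy of $\tilde{\kappa}=\lvert\lambda\rvert$ along all geodesics makes $\lambda$ locally constant, whence $H=\lambda N$ is parallel --- which the paper's proof leaves to the reader. One cosmetic remark: since Definition \ref{PlanarDEF} allows $\kappa$ to be negative, the planarity data could equally be $(-Y,-\kappa)$, but as your computation of $A\gamma'=\langle A\gamma',\gamma'\rangle\gamma'$ is insensitive to this sign, nothing is lost.
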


Theorem~\ref{SecondTH} extends a classical results about surfaces in $\mathbb{R}^{3}$; see, for instance, \cite[pp.~211--212]{spivak1999}.

The paper is organized as follows. The next section presents some preliminaries, mostly for the sake of fixing relevant notation and terminology. In section~\ref{planar}, motivated by the notion of Riemannian circle, we introduce \emph{planar} Riemannian curves, thus generalizing the standard notion of planarity valid in space forms. In section~\ref{planarPG} we then define pseudo-geodesics; in particular, by restricting our attention to planar pseudo-geodesics, we establish a local existence and uniqueness result. In section~\ref{proofs} we proceed with the proofs of Theorem~\ref{MainTH}, Proposition~\ref{PlanarImpliesPseudoGeodesic}, and Theorem~\ref{SecondTH}; although, by virtue of Proposition~\ref{ExistencePROP}, the first part of Theorem~\ref{MainTH} could be obtained directly from Theorem~\ref{AdachiTH}, we give an independent proof of Theorem~\ref{MainTH}. Finally, in section~\ref{generalization} we extend Theorem~\ref{SecondTH} to submanifolds of arbitrary codimension.

As already indicated, pseudo-geodesics have a rather long history. Their study goes back to the work of Wunderlich, who defined them in the classical framework of surfaces in $\mathbb{R}^{3}$~\cite{wunderlich1950c, wunderlich1950d, wunderlich1950a, wunderlich1950b}. Pseudo-geodesics were later studied by Simon~\cite{simon1972} and, in more general settings, by Sachs~\cite{sachs1980a} and Sachs--Strommer~\cite{sachs1980b}. Interestingly, they appear in the theory of developable surfaces with creases; see \cite[p.~421]{pottmann2001} and \cite{jiang2019}. It was a surprise to discover that they also play a role in the theory of isometric immersions.

\section{Preliminaries}
In this section we recall some basic facts that are used throughout the paper.

To begin with, let $Q$ be a Riemannian manifold and $M \subset Q$ an immersed submanifold. Identifying, as customary, the tangent space of $M$ at $p$ with its image under the differential of the inclusion $M \hookrightarrow Q$, we have the orthogonal decomposition
\begin{equation*}
T_{p}Q = T_{p}M \oplus N_{p}M ,
\end{equation*}
where $N_{p}M$ is the normal space of $M$ at $p$.

Under this identification, every smooth vector field $X$ \emph{on} $M$ can be considered as a vector field $X$ \emph{along} $M$, that is, a smooth section of the ambient tangent bundle over $M$.

Let $\mathfrak{X}(M)$ and $\bar{\mathfrak{X}}(M)$ denote the sets of smooth vector fields \emph{on} and \emph{along} $M$, respectively. Let $\mathfrak{X}(M)^{\perp}$ be the set of \emph{normal} vector fields along $M$. Clearly, $\bar{\mathfrak{X}}(M) = \mathfrak{X}(M)\oplus \mathfrak{X}(M)^{\perp}$. If $X\in \mathfrak{X}(M)$ and $\bar{X} \in \bar{\mathfrak{X}}(M)$, then
\begin{equation*}
\tildegrad_{X} \bar{X} =\pitop \tildegrad_{X} \bar{X} + \piperp \tildegrad_{X} \bar{X},
\end{equation*}
where $\tildegrad$ is the Levi-Civita connection of $Q$, $\pitop$ and $\piperp$ are the orthogonal projections onto the tangent and normal bundle of $M$, and where both $X$ and $\bar{X}$ are extended arbitrarily to $Q$.

In particular, if $\bar{X} = Y \in \mathfrak{X}(M)$, then
\begin{equation*}
\tildegrad_{X}Y = \grad_{X} Y + \alpha(X,Y);
\end{equation*}
here $\grad$ is the Levi-Civita connection of $(M,\iota^{\ast}\tilde{g})$ and $\alpha$ the second fundamental form.

Similarly, if $N \in\mathfrak{X}(M)^{\perp}$, then, denoting by $A_{N}$ the shape operator of $M$ with respect to $N$ and by $\grad^{\perp}$ the normal connection of $M$,
\begin{equation*}
\tildegrad_{X} N = A_{N}(X) + \grad^{\perp}_{X}N.
\end{equation*}

The normal connection allows us to introduce a natural covariant differentiation $\nabla^{\ast}$ for the second fundamental form, as follows; see \cite[p.~231]{lee2018} for more details. Let $F$ be the smooth vector bundle over $M$ whose fiber at the point $p \in M$ is the set of bilinear maps $T_{p}M \times T_{p}M \to N_{p}M$. For any smooth section $B$ of $F$ and any $X\in \mathfrak{X}(M)$, let $\nabla^{\ast}_{X}B$ be the smooth section of $F$ given by
\begin{equation*}
\mleft(\nabla_{X}^{\ast}B\mright)(Y,Z)= \grad^{\perp}_{X}(B(Y,Z))-B(\grad_{X}Y,Z)-B(Y,\grad_{X}Z).
\end{equation*}
It is standard to prove that $\nabla^{\ast}$ is a connection in $F$.

We next turn our attention to totally umbilic submanifolds.

Given any normal vector $\eta \in NM$, we say that $M$ is \textit{umbilic in direction $\eta$} if the shape operator $A_{\eta}$ is a multiple of the identity. If $M$ is umbilic in every normal direction, then $M$ is said to be a \textit{totally umbilic submanifold} of $Q$.

One can show that $M$ is totally umbilic if and only if, for every pair of vector fields $X,Y \in \mathfrak{X}(M)$, the following relation holds between the second fundamental form and the mean curvature vector $H$ of $M$:
\begin{equation*}
\alpha(X,Y)=\langle X,Y\rangle H.
\end{equation*}

Recall that the \textit{mean curvature vector} of $M$ is the normal vector field along $M$ given by
\begin{equation*}
H= m^{-1} \tr(\alpha),
\end{equation*}
where $m = \dim M$ and $\mathrm{tr}(\alpha)$ is the trace of $\alpha$. Equivalently, in terms of a local orthonormal frame $(X_{1}, \dotsc, X_{m})$ for $TM$,
\begin{equation*}
H= m^{-1} \sum_{j =1}^{m} \alpha(X_{j},X_{j}) .
\end{equation*}

Among totally umbilic submanifolds, extrinsic spheres are particularly important. A totally umbilic submanifold is called an \textit{extrinsic sphere} if the mean curvature vector is parallel with respect to the normal connection, that is, if $\grad^{\perp}_{X}H=0$ for all $X \in \mathfrak{X}(M)$. Beware that some authors require $H$ to be nonzero.

Our main interest in this paper lies in the family of totally umbilic submanifolds with \textit{parallel normalized mean curvature vector}, which naturally generalizes that of non-totally geodesic extrinsic spheres.

Suppose that $H$ is always different from zero. Then the unit normal vector field $H/\lVert H \rvert$ is well-defined. One says that $M$ has \textit{parallel normalized mean curvature vector} if 
\begin{equation*}
\nabla^{\perp}_{X} (H/\lVert H \rvert) =0 \quad \text{for all $X \in \mathfrak{X}(M)$}.
\end{equation*}

\section{Planar curves}\label{planar}
In this section we define planar curves in a Riemannian manifold $M \equiv (M, \langle \cdot{,} \cdot \rangle)$ and extend several well-known results about circles.

\begin{definition}\label{PlanarDEF}
Let $\gamma \colon I \to M$ be a (smooth) unit-speed curve, and denote by $T$ its tangent vector. We say that $\gamma$ is \textit{planar} if there exist a unit vector field $Y$ along $\gamma$ and a function $f \colon I \to \mathbb{R}$ such that
\begin{equation}\label{PlanarSys}
\begin{cases}
\grad_{T} T = f Y,\\
\grad_{T} Y = -f T .
\end{cases}
\end{equation}
\end{definition}

\begin{remark}\label{PlanarRMK}
\leavevmode
\begin{itemize}
\item In dimension two every (unit-speed) curve is planar.
\item A geodesic is a planar curve with $f=0$.
\item A planar curve with constant $f>0$ is called a \emph{circle}~\cite{nomizu1974}.
\item If $M$ has constant sectional curvature, then a curve in $M$ is planar if and only if it lies in some two-dimensional, totally geodesic submanifold of $M$. 
\item When $\kappa > 0$, a curve $\gamma$ is planar if and only if it has order two at all points.
\end{itemize}
\end{remark}

Hence, planar curves generalize Riemannian circles. Nomizu and Yano proved that circles are precisely those curves in $M$ that satisfy the differential equation
\begin{equation*}
\grad^{2}_{T}T +\kappa^{2}T=0,
\end{equation*}
where $\kappa=\langle \grad_{T} T, \grad_{T} T \rangle^{1/2}$ is the geodesic curvature. In the case of planar curves, the following lemma holds.

\begin{lemma}[{\cite[Lemma~2.3]{kozaki2002}}] \label{PlanarLM}
Suppose that $\kappa>0$. Then $\gamma$ is planar if and only if 
\begin{equation} \label{PlanarEQ}
\kappa \grad^{2}_{T}T + \kappa^{3}T- \kappa'\grad_{T}T =0.
\end{equation}
\end{lemma}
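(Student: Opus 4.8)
The plan is to identify the auxiliary unit field $Y$ of Definition \ref{PlanarDEF} with the normalized acceleration $\grad_{T}T/\kappa$; both implications then reduce to a short local computation, the only genuine difficulty being the geodesic points where $\kappa$ vanishes. Since (\ref{PlanarEQ}) is invariant under $\kappa \mapsto -\kappa$, the sign of $\kappa$ is immaterial, and I may freely take $\kappa = \langle \grad_{T}T, \grad_{T}T\rangle^{1/2} \geq 0$.

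For the forward implication I would assume the system (\ref{PlanarSys}), differentiate the first equation along $\gamma$, and substitute the second, obtaining $\grad^{2}_{T}T = \kappa' Y + \kappa\grad_{T}Y = \kappa' Y - \kappa^{2}T$. To avoid dividing by $\kappa$---and hence to reach an identity valid at every point, including where $\kappa=0$---I would multiply by $\kappa$ and use $\kappa Y = \grad_{T}T$ once more to rewrite $\kappa\kappa'Y$ as $\kappa'\grad_{T}T$. This gives $\kappa\grad^{2}_{T}T - \kappa'\grad_{T}T + \kappa^{3}T = 0$, which is precisely (\ref{PlanarEQ}).

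For the converse I would work first on the open set $U = \{\kappa > 0\}$, where $\kappa$ is smooth, and take the natural candidate $Y := \grad_{T}T/\kappa$. This is a unit field satisfying the first equation of (\ref{PlanarSys}) by construction; for the second, differentiating yields $\grad_{T}Y = (\kappa\grad^{2}_{T}T - \kappa'\grad_{T}T)/\kappa^{2}$, and (\ref{PlanarEQ}) identifies the numerator with $-\kappa^{3}T$, so that $\grad_{T}Y = -\kappa T$. Hence $\gamma$ is planar on $U$.

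The step I expect to be the main obstacle is assembling a single smooth unit field $Y$ on all of $I$, i.e. controlling the behaviour at the zeros of $\kappa$. On an open subinterval where $\kappa \equiv 0$ the curve is a geodesic arc, on which (\ref{PlanarSys}) is solved trivially by any parallel unit normal field; the difficulty is to match such a field smoothly with $\grad_{T}T/\kappa$ coming from the adjacent arcs, and to treat isolated zeros, where $\grad_{T}T/\kappa$ is an indeterminate $0/0$ expression. The cleanest way around this is to phrase the equivalence for curves of nowhere-vanishing geodesic curvature---consistent with the remark that a curve of order two is planar precisely when its curvature never vanishes---so that $U = I$ and the local construction above becomes global.
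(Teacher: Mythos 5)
Your proposal is correct and follows essentially the same route as the paper: both directions hinge on identifying $Y$ with $\grad_{T}T/\kappa$, differentiating one equation of \eqref{PlanarSys} and substituting the other. The paper dispatches the degenerate locus simply by observing that the lemma holds for geodesics and then assuming $\kappa>0$ throughout $I$; your more explicit worry about matching $Y$ across isolated zeros of $\kappa$ is legitimate, and your resolution (restricting to nowhere-vanishing curvature, consistent with the remark that order-two curves with nonvanishing curvature are planar) amounts to the same convention the paper adopts implicitly.
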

\begin{proof}
Let $Y = \nabla_{T}T/\kappa$. If $\gamma$ is planar, then
\begin{align*}
\grad^{2}_{T}T = \grad_{T} (\kappa Y) &= \kappa \grad_{T}Y+T(\kappa)Y\\
&= -\kappa^{2}T+\kappa' Y\\
&= -\kappa^{2}T+\kappa'  \frac{\grad_{T}T}{\kappa},
\end{align*}
which implies \eqref{PlanarEQ}.

As for the converse, let us compute
\begin{equation*}
\grad_{T}Y = \grad_{T} \mleft(\frac{\grad_{T}T}{\kappa}\mright) = \frac{\grad^{2}_{T}T}{\kappa}+ \mleft(\frac{1}{\kappa}\mright)' \grad_{T}T .
\end{equation*}
In particular, if \eqref{PlanarEQ} holds, then
\begin{align*}
\grad_{T}Y &= \frac{\frac{\kappa'}{\kappa}\grad_{T}T - \kappa^{2}T}{\kappa}+ \mleft(\frac{1}{\kappa}\mright)' \grad_{T}T\\
&= - \kappa T,
\end{align*}
as desired.
\end{proof}


We now characterize planar curves through the notion of development, in the sense of Cartan. Our result extends Nomizu--Yano's \cite[Proposition~3]{nomizu1974}. The proof is conceptually the same as the one in \cite{nomizu1974}, but is nevertheless included for the reader's convenience.
\begin{definition}\label{CartanDEF}
Let $p=\gamma(u)$ be an arbitrary point in the image of $\gamma$. The \textit{Cartan development} of $\gamma$ in the tangent space $T_{p}M$ is the unique curve $\gamma^{\ast} \colon I \to T_{p}M$ such that
\begin{enumerate}
\item $(\gamma^{\ast})'(u)=T(u)$;
\item for all $t \in I$, the vector $(\gamma^{\ast})'(t)$ is parallel---in the Euclidean sense---to the parallel transport $\tau_{u}^{t}(T(t))$ of $T(t)$ from $\gamma(t)$ to $\gamma(u)$ along $\gamma$.
\end{enumerate}
\end{definition}

\begin{proposition}\label{DevelopmPROP}
A curve $\gamma$ is planar if and only if its development $\gamma^{\ast}$ in the tangent space $T_{p}M$ is a regular planar curve in the ordinary Euclidean sense.
\end{proposition}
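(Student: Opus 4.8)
The plan is to transport the defining system \eqref{PlanarSys} to $T_{p}M$ and to recognize there, verbatim, the Frenet system of a planar Euclidean curve. The one ingredient needed is the characterizing property of Cartan development: writing $\tau_{u}^{t}$ for parallel translation along $\gamma$ from $\gamma(t)$ to $p=\gamma(u)$---a linear isometry of $T_{\gamma(t)}M$ onto $T_{p}M$---one has, for every smooth vector field $V$ along $\gamma$,
\begin{equation*}
\frac{d}{dt}\,\tau_{u}^{t}\left(V(t)\right) = \tau_{u}^{t}\left(\grad_{T} V(t)\right).
\end{equation*}
This is immediate from the definition of parallel transport (a parallel field maps to a constant vector, and both sides are linear in $V$). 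Since $\gamma$ is unit-speed, condition (ii) in the definition of $\gamma^{\ast}$ gives $(\gamma^{\ast})'(t)=\tau_{u}^{t}(T(t))$; I set $T^{\ast}(t):=(\gamma^{\ast})'(t)$, which is a unit vector field because $\tau_{u}^{t}$ is an isometry, so $\gamma^{\ast}$ is automatically regular. Differentiating once more yields $(\gamma^{\ast})''(t)=\tau_{u}^{t}(\grad_{T}T)$.

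Next I would record what planarity means in the ordinary Euclidean sense. I claim that a regular, unit-speed curve $c$ in the Euclidean space $T_{p}M$ lies in a two-dimensional affine subspace if and only if there exist a unit field $Y^{\ast}$ along $c$ and a function $\kappa^{\ast}$ with $c''=\kappa^{\ast}Y^{\ast}$ and $(Y^{\ast})'=-\kappa^{\ast}c'$. For the forward implication, let $\Pi$ be the fixed plane containing $c$ and take $Y^{\ast}$ to be $c'$ rotated by $+\pi/2$ inside $\Pi$ (a smooth unit field, even where the curvature vanishes); unit speed forces $c''\perp c'$, and $c''\in\Pi$, so $c''=\kappa^{\ast}Y^{\ast}$, while differentiating $\langle c',Y^{\ast}\rangle=0$ gives $(Y^{\ast})'=-\kappa^{\ast}c'$. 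For the converse I would fix $t_{0}$, set $\Pi:=\Span(T^{\ast}(t_{0}),Y^{\ast}(t_{0}))$, and for any fixed $W\perp\Pi$ put $f=\langle T^{\ast},W\rangle$, $g=\langle Y^{\ast},W\rangle$; then $f'=\kappa^{\ast}g$ and $g'=-\kappa^{\ast}f$, so $f^{2}+g^{2}$ is constant and vanishes at $t_{0}$, whence $f\equiv g\equiv 0$. Letting $W$ range over an orthonormal basis of $\Pi^{\perp}$ shows $T^{\ast}(t),Y^{\ast}(t)\in\Pi$ for all $t$, and integrating $c'=T^{\ast}$ places $c$ in the affine plane $c(t_{0})+\Pi$.

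With these two facts the proposition follows by matching the systems. If $\gamma$ is planar with data $(Y,\kappa)$, set $Y^{\ast}(t):=\tau_{u}^{t}(Y(t))$, again a unit field; the intertwining identity converts \eqref{PlanarSys} into $(T^{\ast})'=\kappa Y^{\ast}$ and $(Y^{\ast})'=-\kappa T^{\ast}$, so by the Euclidean characterization $\gamma^{\ast}$ is a regular planar curve. Conversely, given a regular planar $\gamma^{\ast}$, I would produce its Euclidean data $(Y^{\ast},\kappa^{\ast})$ as above, pull it back by the inverse isometry $\tau_{t}^{u}=(\tau_{u}^{t})^{-1}$ to define $Y(t):=\tau_{t}^{u}(Y^{\ast}(t))$ and $\kappa:=\kappa^{\ast}$, and apply the intertwining identity in reverse to recover exactly \eqref{PlanarSys}.

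I expect the only delicate point to be the behavior at points where $\kappa$ (equivalently $\kappa^{\ast}$) vanishes, where the principal normal is not determined by the acceleration. This is precisely why I would supply $Y^{\ast}$ by the global rotation-within-$\Pi$ construction rather than by normalizing $c''$, so that $Y^{\ast}$---and hence the pulled-back $Y$---is smooth and of unit length on all of $I$. Since Definition \ref{PlanarDEF} asks only for \emph{some} unit field $Y$ and \emph{some} real-valued $\kappa$, and not for the normalized acceleration, this causes no difficulty, and the equivalence holds on all of $I$ without excising geodesic points.
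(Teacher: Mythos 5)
Your proof is correct and follows essentially the same route as the paper: transport the system \eqref{PlanarSys} to $T_{p}M$ via the intertwining identity $\frac{d}{dt}\,\tau_{u}^{t}(V(t))=\tau_{u}^{t}(\grad_{T}V(t))$ and identify the resulting equations with the Euclidean Frenet system \eqref{PlanarSys2}. The only difference is that you explicitly verify that this system characterizes ordinary planarity in $T_{p}M$ (and handle the vanishing-curvature points via the rotation-within-$\Pi$ construction), a step the paper's proof treats as clear.
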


\begin{proof}
Assume that $\gamma$ is planar so that \eqref{PlanarSys} holds. Let
\begin{equation*}
T^{\ast}(t) = \tau_{u}^{t} \,T(t) \quad \text{and} \quad Y^{\ast}(t) = \tau_{u}^{t}\, Y(t).
\end{equation*}
Since the map $\tau_{u}^{t} \colon T_{\gamma(t)}M \to T_{\gamma(u)}M$ is linear and $\tau_{u}^{t+h}=\tau_{u}^{t}\circ\tau_{t}^{t+h}$, we obtain
\begin{align*}
\mleft(T^{\ast}\mright)'(t) &= \lim_{h \to 0} \frac{T^{\ast}(t+h)-T^{\ast}(t)}{h}=  \lim_{h \to 0} \frac{\tau_{u}^{t} \mleft(\tau_{t}^{t+h}\,T(t+h) -T(t)\mright)}{h}\\
&= \tau_{u}^{t} \, \lim_{h\to 0} \frac{\tau_{t}^{t+h}\, T(t+h) -T(t)}{h}\\
&=\tau_{u}^{t} \, \nabla_{T}T (t).
\end{align*}
Likewise, we have
\begin{equation*}
\mleft(Y^{\ast}\mright)'(t)=\tau_{u}^{t}\, \nabla_{T}Y (t) .
\end{equation*}
By virtue of these identities, \eqref{PlanarSys} implies
\begin{equation} \label{PlanarSys2}
\mleft(T^{\ast}\mright)' = f Y^{\ast}  \quad \text{and} \quad \mleft(Y^{\ast}\mright)' =- f T^{\ast}.
\end{equation}
Clearly, these equations express the fact that $\gamma^{\ast}$ is a planar curve in $T_{p}M$.

Conversely, assume that the development $\gamma^{\ast}$ of $\gamma$ in $T_{p}M$ is a regular planar curve. Then there exist a unit vector field $Y^{\ast}$ along $\gamma^{\ast}$ and a smooth function $f$ that, together with $T^{\ast}=(\gamma^{\ast})'$, satisfy \eqref{PlanarSys2}. Since $\tau_{u}^{t}$ is an isomorphism between $T_{\gamma(t)}M$ and $T_{\gamma(u)}M$, we obtain \eqref{PlanarSys} from \eqref{PlanarSys2}.
\end{proof}

We conclude this section by proving a global existence theorem, which extends \cite[Theorem~1]{nomizu1974}.

\begin{theorem}
Suppose that $M$ is complete. For any orthonormal pair of vectors $x,y \in T_{p}M$ and any smooth function $f \colon \mathbb{R} \to \mathbb{R}$, there exists a unique planar curve $\gamma \colon \mathbb{R} \to M$ satisfying \eqref{PlanarSys} and such that $\gamma(0)=p$, $T(0)=x$, and $Y(0)=y$.
\end{theorem}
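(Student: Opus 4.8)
The plan is to recast the defining system \eqref{PlanarSys}, together with the constraint $\gamma' = T$, as a first-order ODE on the Whitney sum $E = TM \oplus TM$, whose fiber over $q \in M$ consists of pairs $(v,w) \in T_{q}M \times T_{q}M$, and then to combine the classical local ODE theory with completeness of $M$ to upgrade the local solution to a global one. The whole content of the theorem beyond standard ODE theory is the passage from local to global, and that is precisely where completeness will be used.

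First I would fix local coordinates on $M$ with Christoffel symbols $\Gamma^{i}_{jk}$ and observe that, for a curve $t \mapsto (\gamma(t), T(t), Y(t))$ in $E$, the system \eqref{PlanarSys} together with $\gamma' = T$ reads
\begin{equation*}
\dot\gamma^{i} = T^{i}, \qquad \dot T^{i} = -\Gamma^{i}_{jk} T^{j} T^{k} + \kappa\, Y^{i}, \qquad \dot Y^{i} = -\Gamma^{i}_{jk} T^{j} Y^{k} - \kappa\, T^{i}.
\end{equation*}
Since $\kappa$ is smooth on all of $\mathbb{R}$, the right-hand side defines a smooth (time-dependent) vector field $V$ on $E$. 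The existence and uniqueness theorem for ODEs then yields, for the initial condition $(\gamma(0), T(0), Y(0)) = (p, x, y)$, a unique maximal integral curve of $V$ on some open interval $(t_{-}, t_{+}) \ni 0$. This settles local existence and uniqueness, so it remains to show that $t_{\pm} = \pm\infty$.

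A key preliminary observation is that the solution keeps $(T, Y)$ orthonormal. Writing $a = \langle T, T\rangle$, $b = \langle Y, Y\rangle$, $c = \langle T, Y\rangle$ and differentiating along the solution, metric compatibility together with \eqref{PlanarSys} gives the linear system $a' = 2\kappa c$, $b' = -2\kappa c$, $c' = \kappa(b - a)$, with initial data $a(0) = b(0) = 1$ and $c(0) = 0$ since $x, y$ are orthonormal. The constant functions $a \equiv b \equiv 1$, $c \equiv 0$ solve this system, so by uniqueness $T$ and $Y$ remain orthonormal on $(t_{-}, t_{+})$; in particular $\gamma$ is unit-speed, and $Y$ stays a unit field.

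Global existence is then the main obstacle, and this is exactly where completeness enters. Suppose, for contradiction, that $t_{+} < \infty$. Because $\gamma$ has unit speed, $d(p, \gamma(t)) \le t < t_{+}$ for all $t \in [0, t_{+})$, so $\gamma$ stays in the closed metric ball $\bar{B}(p, t_{+})$, which is compact by Hopf--Rinow. Since $|T| = |Y| = 1$ throughout, the solution $(\gamma, T, Y)$ remains in the set $\{(q, v, w) \in E : q \in \bar{B}(p, t_{+}),\ |v| \le 1,\ |w| \le 1\}$, a compact subset of $E$. By the escape lemma, an integral curve confined to a compact set extends beyond $t_{+}$, contradicting maximality; hence $t_{+} = +\infty$, and symmetrically $t_{-} = -\infty$. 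The resulting $\gamma \colon \mathbb{R} \to M$ satisfies \eqref{PlanarSys} with a unit field $Y$ and the prescribed initial data, so it is planar in the sense of Definition \ref{PlanarDEF}, and uniqueness on all of $\mathbb{R}$ follows from uniqueness of maximal integral curves. As an alternative, one could instead solve the Euclidean system $(T^{\ast})' = \kappa Y^{\ast}$, $(Y^{\ast})' = -\kappa T^{\ast}$ explicitly in the fixed vector space $T_{p}M$, set $\gamma^{\ast}(t) = \int_{0}^{t} T^{\ast}$, and then anti-develop $\gamma^{\ast}$ to $M$ via Proposition \ref{DevelopmPROP}; but the anti-development is governed by the same completeness-dependent ODE, so the two routes are essentially the same.
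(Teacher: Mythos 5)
Your proof is correct, but it takes a genuinely different route from the paper's. The paper solves the developed system $(T^{\ast})' = \kappa Y^{\ast}$, $(Y^{\ast})' = -\kappa T^{\ast}$ globally in the fixed vector space $T_{p}M$ (a linear ODE, hence with global solutions for free), and then invokes the theorem of Kobayashi--Nomizu that on a complete manifold every curve in $T_{p}M$ arises as the development of a curve in $M$; Proposition \ref{DevelopmPROP} then identifies the anti-developed curve as the desired planar curve. You instead integrate the system directly on the Whitney sum $TM \oplus TM$ and carry out the completeness argument by hand: the conserved quantities $\langle T,T\rangle$, $\langle Y,Y\rangle$, $\langle T,Y\rangle$ (whose preservation you correctly verify via uniqueness for the induced linear system) give unit speed, Hopf--Rinow confines the solution to a compact subset of the bundle on any finite time interval, and the escape lemma rules out finite-time blow-up. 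The two arguments use completeness in essentially the same way --- your escape-lemma step is, in effect, the proof of the cited Kobayashi--Nomizu theorem specialized to this situation, as you yourself note at the end --- but yours is self-contained and does not rely on the external anti-development theorem, at the cost of having to establish the orthonormality conservation explicitly. One minor point of rigor: the coordinate expression of the ODE is only chartwise, so you should say that the local solutions patch by uniqueness into a maximal integral curve of a globally defined (time-dependent) vector field on $TM \oplus TM$ before applying the escape lemma; this is routine and does not affect the argument.
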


\begin{proof}
By the standard theory of ordinary differential equations, the problem defined by the system \eqref{PlanarSys2} and the initial condition 
\begin{equation*}
\gamma^{\ast}(0)=p, \quad T^{\ast}(0)=x , \quad Y^{\ast}(0)=y,
\end{equation*}
has a unique global solution $\gamma^{\ast}$ in $T_{p}M$. Since $M$ is complete, it follows from \cite[p.~172, Theorem~4.1]{kobayashi1963} that there is a curve in $M$ whose development in $T_{p}M$ is $\gamma^{\ast}$. By the proof of Proposition~\ref{DevelopmPROP}, this is precisely the desired curve $\gamma$.
\end{proof}

\section{Planar pseudo-geodesics}\label{planarPG}
In \cite{wunderlich1950c} Wunderlich considered a natural extrinsic generalization of a geodesic of a surface $\iota \colon S \hookrightarrow \mathbb{R}^{3}$. Noting that a curve $\gamma$ is a geodesic of $S$ if and only if the ambient acceleration $\tilde{\gamma}'' = (\iota \circ \gamma)''$ is parallel to the surface unit normal $N$, he called a curve $\tilde{\gamma}$ in $\iota(S)$ a \textit{pseudo-geodesic} if the angle $\theta$ between $\tilde{\gamma}''$ and $N$ is constant.

The next lemma characterizes pseudo-geodesics in terms of curvature.
\begin{lemma} \label{PsGeoLM}
A curve $\tilde{\gamma}$ in $\iota(S)$ is a pseudo-geodesic if and only if, for some constant $c\in \mathbb{R}$, the signed geodesic curvature $\kappa_{s}$ and the signed normal curvature $\tau_{s}$ satisfy $\kappa_{s} = c \tau_{s}$.
\end{lemma}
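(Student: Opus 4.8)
The plan is to work in the surface setting $\iota \colon S \hookrightarrow \mathbb{R}^{3}$ and translate the geometric condition ``the angle $\theta$ between $\bar{\gamma}''$ and $N$ is constant'' into the curvature relation $\kappa = c\tau$. First I would set up the orthogonal decomposition of the ambient acceleration. Since $\bar{\gamma} = \iota \circ \gamma$ is unit-speed, its acceleration $\bar{\gamma}''$ is orthogonal to the tangent $\bar{\gamma}' = T$, so it lies in the plane spanned by the intrinsic normal direction (within $T_pS$) and the surface normal $N$. Writing $\bar{\gamma}'' = \grad_{T}T + \alpha(T,T)$ via the Gauss formula recalled in the Preliminaries, the tangential part $\grad_{T}T$ has length $\kappa$ (the geodesic curvature) and the normal part $\alpha(T,T)$ is $\tau N$ up to sign, where $\tau = \langle \bar{\gamma}'', N\rangle$ is the normal curvature.

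Next I would express $\cos\theta$ directly. By definition of the angle between $\bar{\gamma}''$ and $N$,
\begin{equation*}
\cos\theta = \frac{\langle \bar{\gamma}'', N\rangle}{\lVert \bar{\gamma}''\rVert} = \frac{\tau}{\sqrt{\kappa^{2}+\tau^{2}}},
\end{equation*}
using that $\lVert \bar{\gamma}''\rVert^{2} = \kappa^{2} + \tau^{2}$ from the orthogonality of the two components. The core of the argument is then a purely algebraic equivalence: $\theta$ is constant if and only if $\cos\theta$ is constant (one must be slightly careful about the range of $\theta$, but taking $\theta \in [0,\pi/2]$ as the unsigned angle, or handling the sign of $\tau$ explicitly, makes $\cos\theta \mapsto \theta$ injective). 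Squaring and rearranging, constancy of $\tau^{2}/(\kappa^{2}+\tau^{2})$ is equivalent to constancy of the ratio $\kappa^{2}/\tau^{2}$, hence to $\kappa/\tau$ being constant wherever $\tau \neq 0$, which is exactly $\kappa = c\tau$.

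The main subtlety—and the step I would treat most carefully—is the degenerate case where $\bar{\gamma}''$ vanishes or where $\tau = 0$, since then the angle $\theta$ is either undefined or the ratio formulation needs interpretation. I would separate out the geodesic case first: if $\gamma$ is a geodesic, then $\kappa \equiv 0$, so $\bar{\gamma}'' = \alpha(T,T)$ is parallel to $N$ and $\theta \equiv 0$ is trivially constant, matching $\kappa = c\tau$ with $c = 0$; this is precisely the observation that the angle is zero exactly for geodesics. Away from geodesic points one has $\kappa > 0$, and the equivalence above applies cleanly. The only remaining care is to confirm that ``$\kappa = c\tau$ for some constant $c$'' is the right global statement when $\tau$ may have isolated zeros; here I would note that the relation $\kappa = c\tau$ is a closed condition, so it propagates by continuity, and that the constant angle condition forces $c$ to be the same on each connected arc. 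Thus the equivalence holds, establishing the lemma.
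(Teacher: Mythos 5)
Your proof is correct and follows essentially the same route as the paper's: both decompose $\bar{\gamma}''$ into tangential and normal parts, express a trigonometric function of $\theta$ as a ratio involving $\kappa^{2}$ and $\tau^{2}$, and reduce constancy of the angle to constancy of the ratio, with the geodesic case set aside first. Your extra care about zeros of $\tau$ and the injectivity of $\theta \mapsto \cos\theta$ is a slight refinement of the paper's argument (which works with $\sin(\theta)^{2}$ and divides by $\kappa$ instead), but the substance is the same.
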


On the basis of this result, we reinterpret (and extend) Wunderlich's definition as follows. 

\begin{definition} \label{PsGeoDEF}
Let $M$ be a Riemannian manifold, let $\varSigma$ be a metric vector bundle of rank $n$ over $M$, and let $\sigma$ be a smooth field of symmetric bilinear forms $T_{p}M\times T_{p}M \to \varSigma_{p}$ on $M$. A unit-speed curve $\gamma \colon I \to M$ is a \textit{($c$-)pseudo-geodesic of $(M,\sigma)$} if there exist a unit vector field $Y$ along $\gamma$ and a constant $c \geq 0$ such that
\begin{equation*}
\grad_{T} T = c \lVert \sigma(T,T) \rVert Y.
\end{equation*}
\end{definition}

Now, a fundamental property of geodesics is that, for any tangent vector $x \in T_{p}M$, there exists a geodesic, defined for $\lvert t \rvert <\epsilon$ for some $\epsilon>0$, such that $\gamma_{x}(0)=p$ and $\gamma_{x}'(0)=x$. Unfortunately, unless $\dim M = 2$, one cannot expect pseudo-geodesics to enjoy the same property, as the corresponding initial value problem is underdetermined. Thus, in order to have a well-posed problem, we restrict our attention to \emph{planar} pseudo-geodesics. In that case, we can prove the following proposition.

\begin{proposition}\label{ExistencePROP}
Let $p \in M$. For any orthonormal pair of vectors $x,y \in T_{p}M$ and for any constant $c\geq 0$, there exists a unique planar pseudo-geodesic $\gamma_{x,y}$ of $(M,\sigma)$, defined for $\lvert t \rvert <\epsilon$ for some $\epsilon>0$, such that 
\begin{equation*}
\gamma_{x,y}(0)=p, \quad T_{x,y}(0)=x, \quad \nabla_{T_{x,y}}T_{x,y}(0)=c\lVert \sigma(x,x) \rVert y,
\end{equation*}
where $T_{x,y}=\gamma_{x,y}'$.
\end{proposition}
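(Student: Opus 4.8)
The plan is to combine the planarity equations of Definition \ref{PlanarDEF} with the pseudo-geodesic relation of Definition \ref{PsGeoDEF} into a single first-order system and solve it via the Cauchy--Lipschitz theorem. First I would record that a unit-speed curve $\gamma$ carrying a unit field $Y$ is a planar $c$-pseudo-geodesic precisely when
\begin{equation*}
\grad_{T}T = \kappa Y, \qquad \grad_{T}Y = -\kappa T, \qquad \kappa = \begin{cases} c\,\sigma(T,T), & n=1,\\ c\,\lVert\sigma(T,T)\rVert, & n\geq 2, \end{cases}
\end{equation*}
since the first two equations express that $\gamma$ is of order two at every point, while the first equation together with the formula for $\kappa$ is exactly the pseudo-geodesic condition. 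Viewing the triple $(\gamma,T,Y)$ as a curve in the Whitney sum $TM\oplus TM$ and writing the system in a coordinate chart, with $T=\dot\gamma$, it becomes the autonomous system
\begin{equation*}
\dot\gamma^{k} = T^{k}, \qquad \dot T^{k} = -\Gamma^{k}_{ij}T^{i}T^{j} + \kappa Y^{k}, \qquad \dot Y^{k} = -\Gamma^{k}_{ij}T^{i}Y^{j} - \kappa T^{k},
\end{equation*}
on an open subset of $\mathbb{R}^{3m}$, where $m=\dim M$ and the $\Gamma^{k}_{ij}$ are the Christoffel symbols of $\grad$.

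Next I would check the regularity of the right-hand side. The Christoffel symbols and the components of $\sigma$ are smooth, so every term is smooth when $n=1$; for $n\geq 2$ the only potentially non-smooth factor is $\kappa=c\lVert\sigma(T,T)\rVert$, because $\xi\mapsto\lVert\xi\rVert$ fails to be differentiable along the zero section of $\varSigma$. The key observation---and the single delicate point of the whole argument---is that the fiber norm is nonetheless $1$-Lipschitz and the bundle metric of $\varSigma$ is smooth, so $(\gamma,T)\mapsto\lVert\sigma(T,T)\rVert$ is locally Lipschitz; hence the entire right-hand side is locally Lipschitz. By the Cauchy--Lipschitz theorem there is then a unique solution with initial value $(\gamma(0),T(0),Y(0))=(p,x,y)$, defined for $\lvert t\rvert<\epsilon$ for some $\epsilon>0$; moreover this solution is smooth wherever $\sigma(T,T)\neq 0$. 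I expect this regularity issue at the zeros of $\sigma(T,T)$ to be the main obstacle, and the Lipschitz property of the norm is what resolves it.

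Finally I would confirm that the solution is a genuine planar pseudo-geodesic, namely that $T$ is unit-speed and $Y$ is a unit field orthogonal to $T$. Setting $a=\langle T,T\rangle$, $b=\langle Y,Y\rangle$, and $e=\langle T,Y\rangle$ and differentiating along $\gamma$ using the system, one obtains the closed \emph{linear} system
\begin{equation*}
\dot a = 2\kappa e, \qquad \dot b = -2\kappa e, \qquad \dot e = \kappa(b-a),
\end{equation*}
whose coefficient $\kappa=\kappa(t)$ is now a known continuous function of $t$. Since $x,y$ are orthonormal, the initial value is $(a,b,e)(0)=(1,1,0)$, and the constant $(1,1,0)$ solves this linear system; by uniqueness for linear ODEs, $(a,b,e)\equiv(1,1,0)$. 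Thus $\gamma$ is unit-speed, $Y$ is a unit field with $Y\perp T$, and the two connection equations say exactly that $\gamma$ is planar and a $c$-pseudo-geodesic, with
\begin{equation*}
\grad_{T}T(0)=\kappa(0)\,y,
\end{equation*}
which equals $c\,\sigma(x,x)y$ when $n=1$ and $c\,\lVert\sigma(x,x)\rVert y$ when $n\geq 2$, as required. Everything after the Lipschitz estimate is then a routine application of ODE theory.
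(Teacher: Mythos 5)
Your proposal is correct and follows essentially the same route as the paper: both reduce the combined planarity and pseudo-geodesic conditions to a first-order system of $3m$ ODEs in coordinates and invoke local existence and uniqueness for the initial value problem $(\gamma,T,Y)(0)=(p,x,y)$. You go further than the paper in two useful respects---you justify that the right-hand side is locally Lipschitz despite the non-smoothness of $\lVert\sigma(T,T)\rVert$ along the zero section when $n\geq 2$ (where the paper silently appeals to ``standard theory''), and you verify via the linear system for $\langle T,T\rangle$, $\langle Y,Y\rangle$, $\langle T,Y\rangle$ that unit-speed and orthonormality propagate along the solution, a point the paper omits entirely.
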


\begin{proof}
Let $\gamma \colon I \to M$ be a unit-speed curve in $M$, and let $Y$ be a unit vector field along $\gamma$. By Definition~\ref{PlanarDEF}, it is clear that $\gamma$ is a planar pseudo-geodesic of $(M,\sigma)$ if and only if there exists $c \geq 0$ such that
\begin{align*}
	\nabla_{T}T &=  c \lVert \sigma(T,T)\rVert Y,\\
	\nabla_{T}Y&=  -c \lVert\sigma(T,T) \rVert T .
\end{align*}
We shall explore how these equations look in coordinates.

To begin with, the statement of the proposition being local, we may assume that $\varSigma$ is trivial and let $(E_{1}, \dotsc, E_{n})$ be a smooth orthonormal frame for $\varSigma$. Then there are symmetric two-tensors $\sigma^{1}, \dotsc, \sigma^{n}$ on $M$ such that
\begin{equation*}
\sigma = \sigma^{1} E_{1}+\dotsb + \sigma^{n}E_{n}.
\end{equation*}
It follows that
\begin{align*}
\mleft\lVert \sigma(T,T) \mright\rVert&= \mleft< \sigma(T,T),\sigma(T,T) \mright>^{1/2}\\
&= \mleft< \sigma^{s}(T,T)E_{s},\sigma^{s}(T,T)E_{s} \mright>^{1/2}\\
&= \mleft( \mleft(\sigma^{1}(T,T)\mright)^{2} + \dotsb + \mleft(\sigma^{n}(T,T)\mright)^{2} \mright)^{1/2};
\end{align*}
here (and in the rest of this proof), the Einstein summation convention is used.

Now, suppose that $\gamma$ is contained in the domain of a smooth chart $(u^{1},\dotsc,u^{m})$ for $M$ around $p$. Expanding $T$ and $Y$ in terms of the coordinate frame $(\partial_{1} = \partial/\partial u^{1}, \dotsc, \partial_{m}=\partial/\partial u^{m})$, we obtain
\begin{align*}
T&= T^{j}\partial_{j},\\
Y&= Y^{j}\partial_{j}.
\end{align*}
Using \cite[Proposition~4.6]{lee2018}, we compute
\begin{align*}
	\grad_{T}T&= \dot{T}^{k} \partial_{k}+ T^{i} T^{j} \varGamma_{ij}^{k} \partial_{k},\\
	\grad_{T}Y&= \dot{Y}^{k} \partial_{k}+ T^{i} Y^{j} \varGamma_{ij}^{k} \partial_{k},
\end{align*}
where a dot indicates differentiation with respect to $t$ and $\varGamma_{ij}^{k}$ is assumed to be evaluated along $\gamma$.

Likewise, expressing $\sigma^{s}$ in terms of the coordinate coframe $(du^{1}, \dotsc, du^{m})$, we get
\begin{equation*}
\sigma^{s} = \sigma^{s}_{ij} \diff u^{i} \otimes  \diff u^{j}.
\end{equation*}
It follows that 
\begin{equation*}
\sigma^{s}(T(t),T(t))=\sigma^{s}_{ij}(\gamma(t))T^{i}(t) T^{j}(t).
\end{equation*}

In conclusion, $\gamma$ is a planar pseudo-geodesic of $(M,\sigma)$ if and only if, for some $c\geq 0$ and every $k =1,\dotsc,m$, the following two equations hold:
\begin{align}
\label{TEQ}\dot{T}^{k} &=  c \mleft( \mleft(\sigma^{1}_{ij} T^{i}T^{j}\mright)^{2} +\dotsb + \mleft(\sigma^{n}_{ij} T^{i}T^{j}\mright)^{2} \mright)^{1/2} Y^{k} - \varGamma_{ij}^{k} T^{i} T^{j} ,\\
\label{YEQ}\dot{Y}^{k} &=  \varGamma_{ij}^{k} T^{i} Y^{j} -c \mleft( \mleft(\sigma^{1}_{ij} T^{i}T^{j}\mright)^{2} +\dotsb + \mleft(\sigma^{n}_{ij} T^{i}T^{j}\mright)^{2} \mright)^{1/2}  T^{k}.
\end{align}
Together with $\dot{u}^{k}=T^{k}$, equations \eqref{TEQ} and \eqref{YEQ} define a system of $3m$ ordinary differential equations in the $3m$ unknown functions $(u^{k},T^{k},Y^{k})_{k=1}^{m}$, which admits a unique local solution for any initial condition $(u^{k}(0),T^{k}(0),Y^{k}(0))_{k=1}^{m}$.
\end{proof}

\section{Proofs of the main results}\label{proofs}
Here we prove the new results presented in section~\ref{intro}, starting with Theorem~\ref{MainTH}. Having already verified that the initial value problem for planar pseudo-geodesics is well-posed, the first part of Theorem~\ref{MainTH} may be obtained as a corollary of Theorem~\ref{AdachiTH}. However, we decided to include an independent proof for the benefit of the reader.

To begin with, it is useful to establish a lemma.

\begin{lemma} \label{AlphaLM}
Suppose that, for each orthonormal pair of tangent vectors $x,y$ in $T_{p}M$, either $\alpha(x,y)=0$ or $\alpha(x,x)=\alpha(y,y)=0$. Then the following conclusions hold:
\begin{enumerate}[font=\upshape]
\item\label{IT1} $\alpha(x,x)= \pm \alpha(y,y)$ for any orthonormal $x,y$ in $T_{p}M$.
\item\label{IT2} If $\alpha(x,x)=0$ for some $x$ in $T_{p}M$, then $\alpha$ vanishes at $p$.
\end{enumerate}
\end{lemma}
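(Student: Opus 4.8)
The plan is to extract both conclusions directly from the hypothesis by feeding it a few carefully chosen orthonormal pairs and then exploiting the bilinearity and symmetry of $\alpha$ through polarization. The whole argument is algebraic at the fixed point $p$; no differentiation is involved.

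For \ref{IT1}, I would fix an orthonormal pair $x,y$ and pass to the rotated orthonormal pair $u=(x+y)/\sqrt{2}$, $v=(x-y)/\sqrt{2}$. Expanding by bilinearity and using $\alpha(x,y)=\alpha(y,x)$ gives
\begin{equation*}
\alpha(u,v)=\tfrac12\left(\alpha(x,x)-\alpha(y,y)\right),\qquad \alpha(u,u)+\alpha(v,v)=\alpha(x,x)+\alpha(y,y).
\end{equation*}
Now apply the hypothesis to the pair $(u,v)$. Its first alternative, $\alpha(u,v)=0$, yields $\alpha(x,x)=\alpha(y,y)$; its second alternative, $\alpha(u,u)=\alpha(v,v)=0$, forces the sum above to vanish, hence $\alpha(x,x)=-\alpha(y,y)$. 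In either case $\alpha(x,x)=\pm\alpha(y,y)$, which is exactly \ref{IT1}.

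For \ref{IT2}, assume $\alpha(x,x)=0$ for a unit vector $x$; the case of an arbitrary nonzero $x$ follows by homogeneity, and $\dim M=1$ is then immediate. By \ref{IT1}, every unit $z\in x^{\perp}$ satisfies $\alpha(z,z)=\pm\alpha(x,x)=0$, so the entire diagonal of $\alpha$ vanishes along $x$ and along $x^{\perp}$. It remains to kill the mixed terms $\alpha(x,z)$ with $z\perp x$ a unit vector. Here I would rotate inside $\Span(x,z)$: setting $x_\theta=\cos\theta\,x+\sin\theta\,z$ and $z_\theta=-\sin\theta\,x+\cos\theta\,z$ and using $\alpha(x,x)=\alpha(z,z)=0$, one computes
\begin{equation*}
\alpha(x_\theta,z_\theta)=\cos(2\theta)\,\alpha(x,z),\qquad \alpha(x_\theta,x_\theta)=\sin(2\theta)\,\alpha(x,z)=-\alpha(z_\theta,z_\theta).
\end{equation*}
Taking $\theta=\pi/8$, so that $\cos(2\theta)=\sin(2\theta)=1/\sqrt{2}\neq0$, \emph{both} alternatives of the hypothesis applied to $(x_\theta,z_\theta)$ collapse to $\alpha(x,z)=0$. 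Since the diagonal of $\alpha$ now vanishes in every direction and $\alpha(x,z)=0$ for all $z\perp x$, the polarization identity (a symmetric bilinear form that vanishes on the diagonal is identically zero) shows that $\alpha$ vanishes at $p$.

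The main obstacle is precisely this last step of \ref{IT2}. Once the diagonal has been shown to vanish, the hypothesis no longer \emph{directly} forces $\alpha(x,z)=0$, because its second alternative $\alpha(x,x)=\alpha(z,z)=0$ is already satisfied at $\theta=0$, so the disjunction gives no information there. The purpose of rotating by a generic angle is to arrange that the off-diagonal term $\cos(2\theta)\alpha(x,z)$ and the new diagonal terms $\pm\sin(2\theta)\alpha(x,z)$ are \emph{simultaneously} nonzero multiples of $\alpha(x,z)$; then neither branch of the hypothesis can hold unless $\alpha(x,z)=0$. This trick works uniformly for all $\dim M\geq 2$ and is what upgrades the diagonal vanishing to full vanishing of $\alpha$.
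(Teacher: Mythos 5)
Your proof is correct. Part \ref{IT1} is exactly the paper's argument: apply the hypothesis to the rotated orthonormal pair $2^{-1/2}(x+y),\,2^{-1/2}(x-y)$ and read off $\alpha(x,x)=\alpha(y,y)$ from the first alternative and $\alpha(x,x)=-\alpha(y,y)$ from the second. For part \ref{IT2} you take a genuinely different route. The paper fixes an orthonormal basis $(x_1,\dotsc,x_m)$, uses \ref{IT1} to kill the diagonal, and then kills the mixed terms by observing that $2^{-1/2}(x_j+x_k)$ is orthonormal to a \emph{third} basis vector $x_h$ with $h\neq j,k$, so that \ref{IT1} forces $\alpha(x_j+x_k,x_j+x_k)=\pm\alpha(x_h,x_h)=0$ and hence $\alpha(x_j,x_k)=0$. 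You instead stay inside the two-plane $\Span(x,z)$ and rotate by $\theta=\pi/8$, arranging that both branches of the disjunction applied to $(x_\theta,z_\theta)$ reduce to $\alpha(x,z)=0$; your closing polarization step is sound, since vanishing of $\alpha(x,x)$, of $\alpha(z,z)$ for $z\perp x$, and of all mixed terms $\alpha(x,z)$ does give $\alpha(v,v)=0$ for every $v$. The trade-off: the paper's argument is slightly shorter and reuses only conclusion \ref{IT1}, but as written it needs a third basis direction and hence $\dim M\geq 3$, whereas your in-plane rotation works uniformly for all $\dim M\geq 2$ and correctly identifies why a generic angle is needed (at $\theta=0$ the second alternative is already satisfied and the hypothesis yields nothing). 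So your version is, if anything, marginally more robust.
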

\begin{proof}
We first prove \eqref{IT1}. If $(x, y)$ is orthonormal, then so is $2^{-1/2}(x+y, x-y)$. Thus, assuming the hypothesis of the lemma, either $\alpha(x+y, x-y)=0$ or else $\alpha(x+y, x+y)=\alpha(x-y, x-y)=0$. It is easy to check, using bilinearity and symmetry, that the first condition implies $\alpha(x,x) = \alpha(y,y)$, whereas the second implies $\alpha(x,x) = - \alpha(y,y)$.

Now we prove \eqref{IT2}. Suppose that there is a unit vector $x_{1}$ in $T_{p}M$ such that $\alpha(x_{1},x_{1})=0$, and let $(x_{1}, \dotsc, x_{m})$ be an orthonormal basis of $T_{p}M$. Noting that the vectors $2^{-1/2}(x_{j}+x_{k})$ and $x_{h}$ are orthonormal when $h \neq j,k$, we deduce from statement~\eqref{IT1} that both $\alpha(x_{j}, x_{j})$ and $\alpha(x_{j}+x_{k}, x_{j}+x_{k})$ vanish for all $j,k =1, \dotsc, m$. Since
\begin{equation*}
\alpha(x_{j}+x_{k}, x_{j}+x_{k}) = 2\alpha(x_{j}, x_{k}),
\end{equation*}
we conclude that $\alpha(x_{j},x_{k})=0$ for all $j$ and $k$. Hence, by bilinearity, $\alpha$ vanishes at $p$.
\end{proof}

\begin{proof}[Proof of Theorem~\textup{\ref{MainTH}}]
By Lemma~\ref{PlanarLM}, the extrinsic shape of $\gamma$ is a planar curve in $Q$ precisely when
\begin{equation} \label{ExtrPlanarEQ}
\tilde{\kappa} \tildegrad^{2}_{T}T + \tilde{\kappa}^{3}T- \tilde{\kappa}' \tildegrad_{T}T =0,
\end{equation}
where $\tilde{\kappa} = \langle \tildegrad_{T} T, \tildegrad_{T} T \rangle^{1/2} >0$, and where we identified $\widetilde{T} = (\iota\circ \gamma)'$ with $T$. Since $\tildegrad_{T}T = \grad_{T}T + \alpha(T,T)$, denoting by $\tau$ the length of $\alpha(T,T)$, it follows that $\tilde{\kappa}=\sqrt{\kappa^{2}+\tau^{2}}$. 

Let $p \in M$. If all directions are asymptotic at $p$, then we have a geodesic point. On the other hand, if $x$ is a nonasymptotic vector at $p$, then, for every curve $\gamma$ such that $\gamma(0)=p$ and $\gamma'(0)=x$, there exists an open interval $(-\epsilon, \epsilon)$ such that $\tau(t) \neq0$ in $(-\epsilon, \epsilon)$. 

Assume that $x$ is not asymptotic. Then, in $(-\epsilon, \epsilon)$,
\begin{equation*}
\tilde{\kappa}' = \frac{2\kappa \kappa'+2\tau \tau'}{2\tilde{\kappa}} = \frac{\tilde{\kappa}\mleft(\kappa \kappa'+\tau \tau'\mright)}{\kappa^{2}+\tau^{2}},
\end{equation*}
so that equation \eqref{ExtrPlanarEQ} becomes
\begin{equation} \label{ExtrPlanarEQ2}
\tildegrad^{2}_{T}T + \tilde{\kappa}^{2}T- \frac{\kappa \kappa'+\tau \tau'}{\kappa^{2}+\tau^{2}} \tildegrad_{T}T =0.
\end{equation}
Moreover, by computing
\begin{align*}
\tildegrad_{T}T &= \grad_{T}T + \alpha(T,T),\\
\tildegrad^{2}_{T}T &= \grad^{2}_{T}T + \alpha(T, \grad_{T}T) + \tildegrad_{T} \alpha(T,T),
\end{align*}
we see that \eqref{ExtrPlanarEQ2} is equivalent to
\begin{equation*}
\grad^{2}_{T}T + \alpha(T, \grad_{T}T) + \tildegrad_{T} \alpha(T,T) + \tilde{\kappa}^{2}T- \frac{\kappa \kappa'+\tau \tau'}{\kappa^{2}+\tau^{2}} \mleft(\grad_{T}T + \alpha(T,T) \mright)=0.
\end{equation*}
Decomposing into tangent and normal components, we finally obtain
\begin{align}
\label{TangEQ}&\grad^{2}_{T}T + A_{\alpha(T,T)}T+ \mleft(\kappa^{2}+\tau^{2}\mright)T-\frac{\kappa \kappa'+\tau \tau'}{\kappa^{2}+\tau^{2}} \grad_{T}T=0,\\  
\label{NormEQ}&\alpha(T, \grad_{T}T) +  \grad^{\perp}_{T} \alpha(T,T) - \frac{\kappa \kappa'+\tau \tau'}{\kappa^{2}+\tau^{2}} \alpha(T,T)=0.
\end{align}

In particular, if $\kappa=c\tau$ for some $c > 0$ and $\gamma$ is planar, then \eqref{TangEQ} and \eqref{NormEQ} simplify to
\begin{align} 
&A_{\alpha(T,T)}T + \tau^{2}T=0,\notag\\  
\label{NormEQ2}&\alpha(T, \grad_{T}T) +  \grad^{\perp}_{T} \alpha(T,T) - \frac{\tau'}{\tau} \alpha(T,T)=0.
\end{align}
Using
\begin{equation*}
\mleft(\nabla_{T}^{\ast}\alpha\mright)(T,T)= \grad^{\perp}_{T}\alpha(T,T)-2\alpha(\grad_{T}T,T),
\end{equation*}
we rewrite \eqref{NormEQ2} as
\begin{equation*}
3\alpha(T, \grad_{T}T) + \mleft(\nabla_{T}^{\ast}\alpha\mright)(T,T) - \frac{\tau'}{\tau} \alpha(T,T)=0.
\end{equation*}

At $t=0$, since $(\grad_{T}T)_{t=0}=c\tau(0)Y(0)$, the last equation specializes to
\begin{equation*}
\alpha(T(0), Y(0)) =\frac{1}{3c \tau(0)} \mleft( \frac{\tau'(0)}{\tau(0)} \alpha(T(0),T(0))- \mleft(\nabla_{T}^{\ast}\alpha\mright)(T(0),T(0))\mright).
\end{equation*}
This equation implies that, given a unit vector $x \in T_{p}M$ that is not asymptotic, the value $\alpha(x,y)$ does not depend on $y \in T_{p}M$ so long as $\langle x, y \rangle =0$. In fact, since $\alpha(x,-y) = - \alpha(x,y)$, it is clear that $\alpha(x,y) = 0$ for every $x$ and $y$ such that $\langle x, y \rangle =0$. 

If, on the other hand, $x$ is asymptotic, then, for each $y$ in the orthogonal complement of $x$ in $T_{p}M$, either $\alpha(x,y)=0$ or $\alpha(y,y)=0$; indeed, if $\alpha(y,y)\neq 0$, then $\alpha(y,x)=0$ by the previous argument.

Summing up, we have shown that if \eqref{NormEQ2} holds for every nonasymptotic $x \in T_{p}M$, then, for each orthogonal pair of vectors $x,y$ in $T_{p}M$, either $\alpha(x,y)=0$ or $\alpha(x,x)=\alpha(y,y)=0$. Hence, by Lemma~\ref{AlphaLM}, if there is $x\in T_{p}M$ such that $\alpha(x,x)=0$, then $p$ is a geodesic point.

Assume that there is no such vector. It follows by continuity that there exists a neighborhood $U$ of $p$ in $M$ whose points are nongeodesic. Applying the lemma in \cite[p.~168]{nomizu1974}, we deduce that if the assumption of the first part of Theorem~\ref{MainTH} holds, then $M$ is totally umbilic in $U$ and the normalized mean curvature vector coincides with $\overline{\alpha}=\alpha(T,T)/\tau$ along $\iota \circ\gamma$. Equation \eqref{NormEQ2} therefore simplifies to
\begin{equation*}
\grad^{\perp}_{T} \alpha(T,T) - \frac{\tau'}{\tau} \alpha(T,T)=0.
\end{equation*}
Substituting $\alpha(T,T) = \tau \overline{\alpha}$, this becomes
\begin{equation*}
\grad^{\perp}_{T} \overline{\alpha}=0,
\end{equation*}
as desired.

Conversely, suppose that $\iota$ is totally umbilic and the mean curvature vector never vanishes. It follows that $\alpha(T, \grad_{T}T)= 0$ and $\overline{\alpha} = \alpha(T,T)/\tau$ coincides with the normalized mean curvature vector along $\iota \circ \gamma$. Moreover,
\begin{equation*}
\pitop \tildegrad_{T}\overline{\alpha} = \langle \tildegrad_{T}\overline{\alpha}, T \rangle T = -\langle \overline{\alpha}, \tildegrad_{T}T\rangle = -\tau T.
\end{equation*}
A straightforward computation reveals that equations \eqref{TangEQ} and \eqref{NormEQ} are now equivalent to
\begin{align}
&\grad^{2}_{T}T +\kappa^{2}T-\frac{\kappa \kappa'+\tau\tau'}{\kappa^{2}+\tau^{2}} \grad_{T}T=0,\notag\\  
\label{NormEQ3}&\tau \grad^{\perp}_{T} \overline{\alpha} + \tau' \overline{\alpha} - \tau \frac{\kappa \kappa'+\tau \tau'}{\kappa^{2}+\tau^{2}}\overline{\alpha}=0.
\end{align}

Suppose that $\gamma$ is a $c$-pseudo-geodesic. If $c=0$, then $\gamma$ is a geodesic. In that case the first equation gives an identity whereas the second reduces to $\nabla^{\perp}_{T}\overline{\alpha}=0$. On the other hand, if $c> 0$, then substituting $\tau = \kappa /c$ in the first and $\kappa = c \tau$ in the second gives
\begin{align*}
&\kappa \grad^{2}_{T}T + \kappa^{3}T - \kappa' \grad_{T}T=0,\\
& \grad^{\perp}_{T}\overline{\alpha}=0.
\end{align*}
Evidently, these two are fulfilled exactly when $\gamma$ is planar and the normalized mean curvature vector $\overline{\alpha}$ is parallel.
\end{proof}

\begin{proof}[Proof of Proposition~\textup{\ref{PlanarImpliesPseudoGeodesic}}]
Assume the hypothesis of the proposition. Then, for $\nabla^{\perp}_{T} \overline{\alpha}=0$, equation~\eqref{NormEQ3} is equivalent to
\begin{equation*}
\kappa \mleft(\tau'\kappa-\tau\kappa' \mright)=0.
\end{equation*}

Assume that $\iota\circ\gamma$ is planar. Then, on the open subset where $\kappa(t)\neq 0$,
\begin{equation*}
\mleft( \frac{\tau}{\kappa} \mright)'=0,
\end{equation*}
which implies that $\gamma$ is a pseudo-geodesic, as desired.
\end{proof}

\begin{proof}[Proof of Theorem~\textup{\ref{SecondTH}}]
Suppose that $M$ is a hypersurface of $Q$, and let $N$ be a unit normal vector field along $M$. Clearly, if $\gamma$ is a geodesic, then $\tilde{\kappa}=\tau$ and
\begin{equation*}
\tildegrad_{T}T = \alpha(T,T),
\end{equation*}
so that, away from any zero of $\tau$, equation~\eqref{ExtrPlanarEQ} reads
\begin{equation} \label{ExtrPlanarEQgeodesic}
\tau \tildegrad_{T}\alpha(T,T)  + \tau^{3}T- \tau' \alpha(T,T) =0.
\end{equation}
In particular, if $\tau$ is strictly positive, then $\alpha(T,T) =\pm\tau N$, and therefore \eqref{ExtrPlanarEQgeodesic} becomes
\begin{equation}\label{shapeoperatorEQ}
\pm\tau^{2} A(T) + \tau^{3}T =0,
\end{equation}
being $A$ the shape operator.
%

Let $\mathbb{S}_{p}^{m-1}$ be the unit sphere in $T_{p}M$, and denote by $h$ the quadratic form $h(x) = \langle \alpha(x,x), N \rangle$. Assume that equation~\eqref{shapeoperatorEQ} holds for all unit-speed geodesics originating from $p$ with nonasymptotic tangent vector. Then every such vector is an eigenvector of $A$. We first show that if $x,y \in\mathbb{S}_{p}^{m-1}$ are linearly independent and nonasymptotic, then $h(x)=\pm h(y)$. Indeed, if $h(x+y)\neq 0$, then
\begin{equation*}
h(x+y)(x+y)=A(x+y)=A(x)+A(y)=h(x)x+h(y)y,
\end{equation*}
which implies $h(x)=h(y)$. On the contrary, if $x+y$ is asymptotic, then
\begin{equation*}
0=\langle A(x+y),x+y \rangle = (h(x)+h(y))(1+\langle x, y \rangle), 
\end{equation*}
implying $h(x)=-h(y)$. We conclude that $\mathbb{S}_{p}^{m-1}$ can be decomposed as the union of the subsets
\begin{align*}
&\{x \in \mathbb{S}_{p}^{m-1} \mid \langle A(x),x\rangle =0\},\\
&\{x \in \mathbb{S}_{p}^{m-1} \mid \langle A(x),x\rangle =h(y)\},\\
&\{x \in \mathbb{S}_{p}^{m-1} \mid \langle A(x),x\rangle =-h(y)\}.
\end{align*}
It is clear that, since $x \mapsto\langle A(x),x\rangle$ is a continuous function $ \mathbb{S}_{p}^{m-1}  \to \mathbb{R}$, only one of these sets can be nonempty. This proves that $M$ is umbilic at $p$.
%
\end{proof}

\section{A generalization of Theorem~\ref{SecondTH}}\label{generalization}
	
We finally present a generalization of Theorem~\ref{SecondTH} to arbitrary codimension. To this end, let us first recall the notion of (totally) isotropic immersion, as introduced by O'Neill in \cite{oneill1965}.
\begin{definition}
Let $\iota \colon M \hookrightarrow Q$ be an isometric immersion. We say that $\iota$ is \textit{isotropic at $p\in M$} if $\langle \alpha(x,x), \alpha(x,x) \rangle =\lambda_{p} $ for all unit vectors $x\in T_{p}M$. In particular, if $\iota$ is isotropic at every point $p\in M$, then $\iota$ is called a \textit{totally isotropic} immersion. A totally isotropic immersion is \textit{constant isotropic} if $\lambda_{p}$ is constant on $M$.
\end{definition}

\begin{theorem}\label{ThirdTH}
If the extrinsic shape of every geodesic of $M$ is planar, then $\iota$ is totally isotropic. In particular, if the extrinsic shape of every geodesic of $M$ is a circle, then $\iota$ is a non-totally geodesic constant isotropic immersion.
\end{theorem}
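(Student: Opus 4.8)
The plan is to run the argument of Theorem \ref{SecondTH} in arbitrary codimension, replacing the single scalar shape operator by O'Neill's infinitesimal criterion for isotropy. The tangent part of the planarity equation for a geodesic will, at each nonasymptotic direction, force $\alpha(x,x)$ to be orthogonal to $\alpha(x,y)$ for every $y\perp x$; the main obstacle is that this equation carries information only where $\tau\neq0$, so that the asymptotic directions must be controlled separately. I would handle them by exploiting the real-analyticity of $\lVert\alpha(x,x)\rVert^{2}$ along great circles of the unit sphere, which rules out the coexistence of asymptotic and nonasymptotic directions at a point.

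First I would fix $p\in M$ and a unit-speed geodesic $\gamma$ with $\gamma(0)=p$ whose initial tangent $x=\gamma'(0)$ is nonasymptotic, so that $\tau(0)=\lVert\alpha(x,x)\rVert>0$ and hence $\tau>0$ near $0$. Since $\grad_{T}T=0$ and $\tilde{\kappa}=\tau$ along $\gamma$, the planarity of $\iota\circ\gamma$ is expressed by \eqref{ExtrPlanarEQgeodesic}, whose tangent component is \eqref{TangEQ} with $\kappa\equiv0$, namely
\begin{equation*}
A_{\alpha(T,T)}T+\tau^{2}T=0.
\end{equation*}
Evaluating at $t=0$, pairing with an arbitrary $y\perp x$, and using the standard relation $\langle A_{N}x,y\rangle=-\langle\alpha(x,y),N\rangle$ that follows from the Gauss and Weingarten formulas, I obtain
\begin{equation*}
\langle\alpha(x,y),\alpha(x,x)\rangle=0\qquad\text{for all }y\perp x,
\end{equation*}
valid for every nonasymptotic unit vector $x\in T_{p}M$.

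Next I would promote this to full isotropy at $p$. Set $f(x)=\langle\alpha(x,x),\alpha(x,x)\rangle$ on the unit sphere $\mathbb{S}_{p}^{m-1}$, and for an orthonormal pair $x,y$ consider the great circle $x(\theta)=\cos\theta\,x+\sin\theta\,y$. The function $g=f\circ x$ is a trigonometric polynomial, hence real-analytic, and differentiating gives
\begin{equation*}
g'(\theta)=4\langle\alpha(x(\theta),\dot{x}(\theta)),\alpha(x(\theta),x(\theta))\rangle,
\end{equation*}
where $\dot{x}(\theta)$ is a unit vector orthogonal to $x(\theta)$. By the previous step, $g'$ vanishes wherever $x(\theta)$ is nonasymptotic. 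If some direction on the circle is nonasymptotic, then $\{g>0\}$ is a nonempty open set on which $g'\equiv0$; since $g'$ is real-analytic it then vanishes identically, so $g$ is constant, and this constant is positive. Thus each great circle consists either entirely of asymptotic directions or entirely of nonasymptotic ones with $f$ constant along it. Consequently asymptotic and nonasymptotic directions cannot coexist at $p$: if there is a nonasymptotic $x$, every great circle through $x$ is nonasymptotic with value $f(x)$, and since these circles cover $\mathbb{S}_{p}^{m-1}$ we get $f\equiv f(x)>0$; if no direction is nonasymptotic, then $\alpha(x,x)=0$ for all unit $x$, whence $\alpha=0$ at $p$ by polarization. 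In either case $\iota$ is isotropic at $p$, and as $p$ was arbitrary, $\iota$ is totally isotropic.

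For the final assertion, suppose the extrinsic shape of every geodesic is a circle. These shapes are planar, so $\iota$ is totally isotropic by the above, and along each geodesic $\gamma$ the curvature $\tilde{\kappa}=\tau=\sqrt{\lambda\circ\gamma}$ is a positive constant. Positivity shows that no direction is asymptotic, so $\lambda>0$ everywhere and $\iota$ is not totally geodesic; constancy of $\tau$ along every geodesic shows that $\lambda$ is constant along geodesics, hence locally constant and, $M$ being connected, constant on $M$. Therefore $\iota$ is a non-totally geodesic constant isotropic immersion.
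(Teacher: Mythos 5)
Your proof is correct and follows the same overall strategy as the paper: restrict the planarity equation to a geodesic with nonasymptotic initial tangent and extract the identity $\langle\alpha(x,x),\alpha(x,y)\rangle=0$ for $y\perp x$, which is O'Neill's infinitesimal criterion for isotropy. Your route through the tangential component \eqref{TangEQ} and the relation $\langle A_{N}x,y\rangle=-\langle\alpha(x,y),N\rangle$ is the same computation the paper performs by pairing \eqref{ExtrPlanarEQgeodesic2} with the parallel transport $Y$ of $y$; the signs are consistent with the paper's Weingarten convention, and evaluating only at $t=0$, where $\tau>0$, is legitimate. The one place you genuinely diverge is the passage from this pointwise identity to isotropy, and there you work harder than necessary: the paper simply cites O'Neill's Lemma~1 ($\iota$ is isotropic at $p$ exactly when $\langle\alpha(x,x),\alpha(x,y)\rangle=0$ for every orthonormal pair) and observes that the identity is trivial when $\alpha(x,x)=0$. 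Your great-circle argument amounts to re-proving that lemma, and the real-analyticity step is superfluous, because $g'(\theta)=4\langle\alpha(x(\theta),\dot{x}(\theta)),\alpha(x(\theta),x(\theta))\rangle$ also vanishes at every asymptotic $\theta$ (the second factor is zero there); hence $g'\equiv0$ on each great circle outright, $f$ is constant on $\mathbb{S}_{p}^{m-1}$, and the dichotomy between all-asymptotic and all-nonasymptotic circles never arises. Your treatment of the circle case---a positive constant $\tau$ forces $\lambda>0$, rules out total geodesy, and makes $\lambda$ constant along geodesics, hence constant on connected $M$---matches the paper's.
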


\begin{remark}
The second part of this theorem is not new. In fact, Maeda and Sato showed that $\iota$ is a non-totally geodesic constant isotropic immersion exactly when the extrinsic shape of every geodesic of $M$ is a circle and $(\nabla_{X}^{\ast}\alpha)(X,X)=0$ for all $X \in\mathfrak{X}(M)$~\cite[Proposition~3.1]{maeda1983}. More generally, constant isotropic immersions are characterized by the property that the extrinsic shape of every circle in $M$ has constant geodesic curvature~\cite{maeda2003}.
\end{remark}
	
\begin{remark}
In a space form, if the extrinsic shape of every geodesic of $M$ is planar, then $\iota$ is constant isotropic, and thereby any such extrinsic shape is either a geodesic or a circle; see \cite{sakamoto1977}.  	
\end{remark}
	
\begin{proof}
By \cite[Lemma~1]{oneill1965}, $\iota$ is isotropic at $p$ exactly when $\langle \alpha(x,x),\alpha(x,y)\rangle=0$ for every orthonormal pair of vectors $x,y$ in $T_{p}M$. Obviously, if $\alpha(x,x)=0$, then $\langle \alpha(x,x),\alpha(x,y)\rangle=0$ for every $y$, and so we may assume that $x$ is not asymptotic. 
		
Let $\gamma$ be the unit-speed geodesic originating from $p$ with tangent vector $T(0)=x$, and let $Y$ be the parallel transport of $y$ along $\gamma$. Then
\begin{align*}
\langle \alpha(x,x),\alpha(x,y)\rangle &= \langle \alpha(T,T),\alpha(T,Y)\rangle(0)\\
&=  \langle \alpha(T,T), \tildegrad_{T}Y \rangle(0)\\
&= -\langle \tildegrad_{T}\alpha(T,T), Y \rangle(0);
\end{align*}
here we have used the Gauss formula as well as orthogonality of $Y$ and $\alpha(T,T)$.
		
We now show that $\langle \tildegrad_{T}\alpha(T,T), Y \rangle =0$ if $\iota\circ\gamma$ is planar. Indeed, since $\gamma$ is a geodesic, equation~\eqref{ExtrPlanarEQ} gives
\begin{equation}\label{ExtrPlanarEQgeodesic2}
\tau\tildegrad_{T}\alpha(T,T)=-\tau^{3}T+\tau'\alpha(T,T),
\end{equation}
which implies $\langle \tildegrad_{T}\alpha(T,T), Y \rangle =0$. This proves the first part of the theorem.
		
For the second part, assume that $\iota\circ\gamma$ is a circle, so that $\tau>0$ and $\tau'=0$. Then \eqref{ExtrPlanarEQgeodesic2} implies
\begin{equation*}
-\langle \tildegrad_{T}\alpha(T,T),T\rangle = \langle \alpha(T,T), \tildegrad_{T}T\rangle =\langle \alpha(T,T),\alpha(T,T)\rangle=\tau^{2} .
\end{equation*} 
The last equality shows that $\langle\alpha(T,T),\alpha(T,T)\rangle$ is constant along $\gamma$, and from here the statement follows easily.
\end{proof}

\section*{Acknowledgment}
We thank an anonymous referee for valuable comments.

\bibliographystyle{amsplain}
\bibliography{biblio}
\end{document}